\theoremstyle{definition}
\newtheorem{teorema}{Theorem}[section]
\newtheorem{proposicao}[teorema]{Proposition}
\newcommand{\downarrowtail}{\vcenter{\hbox{\rotatebox{90}{$\leftarrowtail$}}}}
\theoremstyle{definition}
\newtheorem{definicao}[teorema]{Definition}
\newtheorem{exemplo}[teorema]{Example}
\newtheorem{examples}[teorema]{Examples}
\newtheorem{obs}[teorema]{Remark}
\theoremstyle{remark}
\title[Manin triples on multiplicative Courant algebroids]{{\large M}anin triples on multiplicative {\large C}ourant algebroids}
\author{{\small A}na {\small C}arolina {\small M}ançur \\ 
{\small \href{mailto:carol.mancu@gmail.com}{\texttt{\lowercase{carol.mancu@gmail.com}}}}
}
\begin{document}

\begin{abstract} 

We extend the characterization of Lie bialgebroids via Manin triples to the context of double structures over Lie groupoids. We consider Lie bialgebroid groupoids, given by LA-groupoids in duality, and establish their correspondence with multiplicative Manin triples, i.e., CA-groupoids equipped with a pair of complementary multiplicative Dirac structures. As an application, we give a new viewpoint to the co-quadratic Lie algebroids of Lang, Qiao, and Yin \cite{lang2021lie} and the Manin triple description of Lie bialgebroid crossed modules.

\end{abstract}

\maketitle
\pagestyle{plain}

\tableofcontents


\section{Introduction}

In the theory of Poisson Lie groups \cite{drinfel1990hamiltonian}, a classical result proved by Drinfeld in \cite{drinfel1988quantum} establishes an equivalence between Lie bialgebras and Manin triples, i.e., quadratic Lie algebras equipped with a pair of transverse Lagrangian Lie subalgebras. An important generalization of this result, due to Liu-Weinstein-Xu \cite{liu1997manin}, is obtained when Lie algebras are replaced by Lie algebroids; in this setting, to obtain the Manin triples corresponding to Lie bialgebroids, one needs to replace quadratic Lie algebras with Courant algebroids, and Manin triples are defined by pairs of transverse Dirac structures there in. 
This paper concerns a further extension of this correspondence to the realm of ``double structures over Lie groupoids'', in the sense explained below.

\vspace{1mm}

Just as Lie bialgebroids arise as infinitesimal counterparts of Poisson groupoids \cite{mackenzie1994lie}, the (first order) differentiation of a Poisson double groupoid \cite{mackenzie1999symplectic} gives rise to ``Lie bialgebroid groupoids'', see \cite{bursztyn2021poisson}; those consist of two LA-groupoids in duality, depicted by
$$
\begin{tikzcd}[
		column sep={2em,between origins},
		row sep={1.3em,between origins},]
		\Gamma & \rightrightarrows & H  \\
		\Downarrow & & \Downarrow \\
		G & \rightrightarrows & M,
\end{tikzcd}
\quad
\begin{tikzcd}[
		column sep={2em,between origins},
		row sep={1.3em,between origins},]
		\,\, \Gamma^{*_G} \,\,\, & \rightrightarrows & \,\, C^*  \\
		\Downarrow \,\,\, & & \Downarrow \\
		G \,\,\, & \rightrightarrows & M,
\end{tikzcd}
$$
such that the Lie algebroids $\Gamma \Rightarrow G$ and $\Gamma^{*_G} \Rightarrow G$ form a Lie bialgebroid.

\vspace{1mm}

In order to obtain a description of Lie bialgebroid groupoids via Manin triples, the key issue is identifying the double nature of their Drinfeld doubles. In the picture above,  
$\Gamma \oplus_G \Gamma^{*_G} \rightarrowtail G$ is a Courant algebroid which carries an additional Lie groupoid structure $\Gamma \oplus_G \Gamma^{*_G} \rightrightarrows H\oplus C^*$; our central observation is that these two structures are compatible in the sense that they form a {\em CA-groupoid} (also known as {\em multiplicative Courant algebroid}), as defined in \cite{li2012courant}. With that, we establish in Theorem \ref{teorema1} the equivalence between Lie bialgebroids groupoids and CA-groupoids equipped with two complementary and multiplicative Dirac structures. In \cite[\S 4]{alvarez2024homological}, the authors investigate, from a supergeometric perspective, a more general version of CA-groupoids, referred to as quasi CA-groupoids, and explore this object in double contexts. However, they do not translate the definitions and results to the classical setting.

\vspace{2mm}

{\footnotesize
\bgroup
\def\arraystretch{1.6}
\begin{center}
\begin{tabular}{ |c|c|c| }
\hline
{\bf Global object} & {\bf (First-order) Infinitesimal object}  & {\bf Drinfeld double} \\ \hline
Poisson Lie group & Lie bialgebra & quadratic Lie algebra
\\ \hline
Poisson groupoid & Lie bialgebroid  & Courant algebroid
\\ \hline
Poisson double groupoid & Lie bialgebroid groupoid  & CA-groupoid
\\ \hline
\end{tabular}
\end{center}
\egroup
}

\vspace{2mm}

In the last section of the paper, we focus on structures over the  unit groupoid $M \rightrightarrows M$. In this case, Lie bialgebroid groupoids are the same as the {\em Lie bialgebroid crossed modules} of \cite{lang2021lie}. 
We show in Theorem \ref{teorema_co-quadLA} that CA-groupoids over the unit groupoid are equivalent to  {\em co-quadratic Lie algebroids}, in the sense of \cite{lang2021lie}, in such a way that multiplicative Manin triples correspond to co-quadratic Manin triples. This perspective recovers \cite[Theorem 3.7]{lang2021lie}, which states that Lie bialgebroid crossed modules correspond to co-quadratic Manin triples, as a special case of Theorem \ref{teorema1}. When $M$ is a point, one recovers the characterization of Lie 2-bialgebras via Manin triples considered in \cite{bai2013lie}. 

\subsection*{Notations} We use the notation $G \rightrightarrows M$ for Lie groupoids, $A \Rightarrow M$ for Lie algebroids, and $E \rightarrowtail M$ for Courant algebroids.

\subsection*{Acknowledgments} The author thanks H. Bursztyn for his continuous support and valuable suggestions; J. P. Ayala for the fruitful conversations; and CAPES and IMPA for their financial support.


\section{Review of Manin triples for Lie bialgebroids}\label{section2}

In what follows, we will frequently use the characterization of Lie algebroid structures on a vector bundle $A\to M$ by means of linear Poisson structures on the total space of the dual vector bundle $A^*\to M$, see \cite[\S 10.3]{mackenzie2005general}.

\subsection{Lie bialgebroids}

Let $A \Rightarrow M$ be a Lie algebroid and suppose that its dual bundle $A^* \rightarrow M$ also has a Lie algebroid structure. We say that $(A, A^*)$ is a \textbf{Lie bialgebroid} if the Lie algebroid differential $d_A : \Gamma(A^*) \rightarrow \Gamma(\wedge^2 A^*)$ is a derivation of the Schouten bracket $[\, , \,]_{A^*}$ on $\Gamma(A^*)$ in the sense that
	$$
	d_{A}[\alpha, \beta]_{A^*} = [d_A\alpha, \beta]_{A^*} + [\alpha, d_A\beta]_{A^*},
	$$
	for all $\alpha, \beta \in \Gamma(A^*)$. 
 An important case of Lie bialgebroids is when $M$ is a point, called a \textbf{Lie bialgebra}.

\vspace{1mm}

\begin{exemplo}\label{exemplosbialgebroids}
    Given a Poisson manifold $(M, \pi)$, the pair $(TM, T^*M)$ is a Lie bialgebroid, where $TM \Rightarrow M$ is the tangent Lie algebroid and the Lie algebroid structure on $T^*M \Rightarrow M$ is given by $\pi$.
\end{exemplo}

\begin{exemplo}\label{exbialgbdr-matrix}
    Let $A \Rightarrow M$ be a Lie algebroid with bracket $[ \,\, ,\,]$ and anchor $\rho : A \rightarrow TM$. Given $\Lambda \in \Gamma(\wedge^2 A)$ such that $[[\Lambda, \Lambda], X] = 0$ for all $X \in \Gamma(A)$, the bracket
    $$
    [\alpha, \beta]_{\Lambda} := \mathcal{L}_{\Lambda^{\sharp} \alpha} \beta - \mathcal{L}_{\Lambda^{\sharp} \beta} \alpha - d_A(\Lambda(\alpha, \beta))
    $$
    on $\Gamma(A^*)$ and the anchor $\rho_* := \rho \circ \Lambda^{\sharp}$ make $A^* \Rightarrow M$ into a Lie algebroid, and $(A, A^*)$ forms a Lie bialgebroid known as \textit{exact Lie bialgebroid} \cite{kosmann1995exact, liu1996exact}. The element $\Lambda$ is called an \textit{r-matrix} on $A$.
\end{exemplo}

\subsection{Manin triples for Lie bialgebroids}\label{subsec:LWX}

We recall the definition of Courant algebroid \cite{liu1997manin} using non-skew-symmetric brackets, as in \cite{roytenberg1999courant}.

\vspace{1mm}

A \textbf{Courant algebroid} over a manifold $M$ is a vector bundle $\mathbb{E} \rightarrow M$, together with a bundle map $\rho : \mathbb{E} \rightarrow TM$, called \textit{anchor}, a non-degenerate symmetric bilinear form $\langle \,\, , \, \rangle$ on fibers, and a bracket $[\![ \,\, , \, ]\!]$ on $\Gamma(\mathbb{E})$, satisfying 
\begin{enumerate}[left=10pt, itemsep=0.3em]
	\item[(C$1$)] $[\![ e_1 , [\![ e_2 , e_3 ]\!] ]\!] = [\![ [\![ e_1 , e_2 ]\!] , e_3 ]\!] + [\![ e_2 , [\![ e_1 , e_3 ]\!] ]\!]$;
	\item[(C$2$)] $\rho(e_1) \langle e_2 , e_3 \rangle = \langle [\![ e_1 , e_2 ]\!] , e_3 \rangle + \langle e_2 , [\![ e_1 , e_3 ]\!] \rangle$;
	\item[(C$3$)] $[\![ e_1 , e_2 ]\!] + [\![ e_2 , e_1 ]\!] = 2 \rho^*(d \langle e_1, e_2 \rangle)$;
	\item[(C$4$)] $[\![ e_1 , f e_2 ]\!] = f [\![ e_1 , e_2 ]\!] + \rho(e_1)(f)e_2$;
\end{enumerate}
for $e_1, e_2, e_3 \in \Gamma(\mathbb{E})$ and $f \in C^{\infty}(M)$, where $\rho^* : T^*M \rightarrow \mathbb{E}^* \simeq \mathbb{E}$ is the dual map to $\rho$. We denote by $\overline{\mathbb{E}}$ the Courant algebroid with the same anchor and bracket of $\mathbb{E}$, but pairing $- \langle \,\, , \, \rangle$.

\vspace{1mm}

\begin{exemplo}\label{CAoverpoint}
    A Lie algebra $\mathfrak{g}$ together with an symmetric, nondegenerate pairing that is invariant,  i.e. $\langle [X,Y], Z\rangle + \langle Y, [X,Z]\rangle = 0$ for all $X, Y , Z \in \mathfrak{g}$, is called a \textbf{quadratic Lie algebra} and corresponds to a Courant algebroid over a point.
\end{exemplo}

\begin{exemplo}\label{standardCA}
The \textit{standard Courant algebroid} over a manifold $M$ is $\mathbb{T}M := TM \oplus T^*M$ with anchor map given by the projection on the first factor, the bilinear form as
$$
\langle (X, \alpha), (Y, \beta) \rangle := \frac{1}{2} (\beta(X) + \alpha(Y)),
$$
and the Courant bracket given by
$$
[\![ (X, \alpha) , (Y, \beta) ]\!] := ([X, Y], \mathcal{L}_X\beta - \iota_Yd\alpha),
$$
for vector fields $X, Y \in \mathfrak{X}(M)$ and 1-forms $\alpha, \beta \in \Omega^1(M)$.
\end{exemplo}

\vspace{2mm}

Let $\mathbb{E} \rightarrowtail M$ be a Courant algebroid. For a vector subbundle $L\to S$ of $\mathbb{E}\to M$, its orthogonal complement $L^{\perp}$ with respect to the fiberwise pairing defines a new subbundle of $\mathbb{E}$ supported on $S$. Recall that $(L^{\perp})^{\perp} = L$. We denote by $\Gamma(\mathbb{E}; L)$ the sections of $\mathbb{E}$ which take values in the subbundle $L$ when restricted to $S$, i.e.
$$
\Gamma(\mathbb{E}; L) := \{ e \in \Gamma(\mathbb{E}) \,\, \vert \,\, e\vert_S \in \Gamma(L) \}.
$$
A \textbf{Dirac structure in $\mathbb{E}$ with support} on a submanifold $S\subseteq M$ is a subbundle $L \rightarrow S$ such that 
\begin{enumerate}[left=10pt, itemsep=0.3em]
	\item[(1)] $L\vert_s$ is Lagrangian in $\mathbb{E}\vert_s$ for all $s \in S$, i.e. $L = L^{\perp}$;
	\item[(2)] $L$ is involutive, i.e. if for any $e_1, e_2 \in \Gamma(\mathbb{E};L)$ we have $[\![ e_1 , e_2 ]\!] \in \Gamma(\mathbb{E}; L)$;
	\item[(3)] $L$ is compatible with the anchor, i.e. $\rho(L) \subseteq TS$.
\end{enumerate}
When $S = M$, we refer to $L$  simply as a \textbf{Dirac structure}, and we say that $(\mathbb{E}, L)$ is a \textbf{Manin pair}. A triple $(\mathbb{E}, L_1, L_2)$ where $\mathbb{E}$ is a Courant algebroid and $L_1, L_2$ are transversal Dirac structures (i.e. $\mathbb{E} = L_1 \oplus L_2$) is called a \textbf{Manin triple}.

\begin{obs}\label{rem:diracLA}
	Dirac structures (with full support) in a Courant algebroid $\mathbb{E} \rightarrowtail M$ inherit a natural Lie algebroid structure by the restriction of anchor and bracket. In case the Dirac structure has support on a proper submanifolds, this is no longer the case, see e.g. \cite[Rem. 2.20]{vysoky2020hitchhiker}.
\end{obs}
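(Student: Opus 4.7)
The plan is to verify the Lie algebroid axioms on $L \to M$ with bracket and anchor inherited by restriction from $\mathbb{E}$. Well-definedness of the restricted bracket on $\Gamma(L)$ is immediate from involutivity, and $\rho|_L$ takes values in $TM$ automatically, since $\rho : \mathbb{E} \to TM$. The nontrivial content is extracting the remaining axioms from (C1)--(C4). First, the Lagrangian condition $L = L^{\perp}$ forces $\langle e_1, e_2 \rangle = 0$ for $e_1, e_2 \in \Gamma(L)$, so (C3) collapses to $[\![ e_1, e_2 ]\!] + [\![ e_2, e_1 ]\!] = 0$, giving skew-symmetry. Once skew-symmetry is available, (C1) becomes the Jacobi identity on $\Gamma(L)$, and (C4) is the Leibniz rule for the anchor in its second slot (and, by skew-symmetry, also in the first).

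The last Lie algebroid axiom, that $\rho$ intertwines the brackets, is not postulated among the Courant axioms, and this is the one step requiring actual computation rather than a direct substitution. It follows, however, on all of $\Gamma(\mathbb{E})$ (not just on $\Gamma(L)$) from (C1) and (C4) alone: expanding $[\![ e_1, [\![ e_2, f e_3 ]\!] ]\!]$ with (C4) and comparing with the right-hand side of (C1) expanded in the same way, the terms involving $f$-multiples of iterated brackets cancel by (C1) applied to the triple $(e_1, e_2, e_3)$, leaving $\rho([\![ e_1, e_2 ]\!])(f) = [\rho(e_1), \rho(e_2)](f)$ for every $f \in C^{\infty}(M)$. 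Restriction to $\Gamma(L)$ then completes the verification.

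The main conceptual point, and the place where being a Dirac structure is genuinely used, is the passage from (C3) to skew-symmetry via the Lagrangian property. This is also the obstruction that prevents the same proof from working in the proper-support case $S \subsetneq M$: although the pairing still vanishes on $\Gamma(\mathbb{E};L)$ along $S$, the Courant bracket of two such sections can depend on their values off $S$, so it need not descend to a well-defined bracket on $\Gamma(L \to S)$, explaining the second sentence of the remark.
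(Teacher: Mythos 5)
The paper states this remark without proof (it is the standard fact going back to Liu--Weinstein--Xu and Roytenberg, with the negative part delegated to the citation), and your verification is correct and complete: isotropy of $L$ kills the symmetric defect in (C3) so the bracket becomes skew on $\Gamma(L)$, involutivity closes the bracket, (C1) and (C4) then give Jacobi and Leibniz, and the anchor-morphism identity follows from (C1) and (C4) exactly as you compute (it is in any case automatic from Jacobi plus Leibniz, so this step could even be omitted). Your account of the failure in the proper-support case --- that $\Gamma(\mathbb{E};L)=\Gamma(L)$ only when $S=M$, and for $S\subsetneq M$ the restriction $[\![ e_1,e_2 ]\!]\vert_S$ depends on the extensions of $e_1\vert_S, e_2\vert_S$ off $S$, so no bracket is induced on $\Gamma(L\to S)$ --- is precisely the obstruction the cited remark refers to.
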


\begin{exemplo}\label{exgraphdirac}
	In Example \ref{standardCA}, both $TM$ and $T^*M$ are Dirac structures in $\mathbb{T}M$. More generally, if $\pi \in \mathfrak{X}^2(M)$ is a bivector field, then the graph $gr(\pi^{\sharp}) \subseteq \mathbb{T}M$ of the associated map $\pi^{\sharp} : T^*M \rightarrow TM$ is a Dirac structure if and only if $\pi$ is Poisson.
\end{exemplo}

\vspace{1mm}

The following result generalizes \cite[Proposition 7.1]{liu1997manin} and provides an example of Dirac structures (with support) on Courant algebroids, with the proof following a similar approach.

\begin{proposicao}\label{proposicao}
	Let $(A \Rightarrow M, A^* \Rightarrow M)$ be a Lie bialgebroid. Then, the vector bundles $B \rightarrow N$ and $ann(B) \rightarrow N$ are Lie subalgebroids of $A \Rightarrow M$ and $A^* \Rightarrow M$, if and only if, $L := B \oplus ann(B) \rightarrow N$ is a Dirac structure on $A \oplus A^*$ with support on $N$.
\end{proposicao}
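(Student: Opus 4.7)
The plan is to check the three defining conditions of a Dirac structure with support on $N$ for $L = B \oplus ann(B)$ — Lagrangianity, anchor compatibility, and involutivity — and observe that the first two reduce to the anchor parts of the subalgebroid hypotheses, while the third captures their bracket parts together with the mixed compatibility forced by the bialgebroid structure.

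First, the Lagrangianity $L = L^\perp$ is automatic: the pairing $\frac12(\beta(X) + \alpha(Y))$ on $A \oplus A^*$ vanishes on $L|_N$ by definition of $ann(B)$, and a rank count gives $\mathrm{rank}(L) = \mathrm{rank}(A) = \frac12 \mathrm{rank}(A \oplus A^*)$, upgrading isotropy to Lagrangianity. Second, since $\rho = \rho_A + \rho_{A^*}$ on $A \oplus A^*$, the anchor condition $\rho(L) \subseteq TN$ splits into $\rho_A(B) \subseteq TN$ and $\rho_{A^*}(ann(B)) \subseteq TN$, which are precisely the anchor parts of the two subalgebroid hypotheses. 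For involutivity I would use the Liu--Weinstein--Xu formula for the Courant bracket on the double,
$$[\![X+\alpha, Y+\beta]\!] = \bigl([X,Y]_A + \mathcal{L}^{A^*}_\alpha Y - \iota_\beta d_{A^*}X\bigr) + \bigl([\alpha,\beta]_{A^*} + \mathcal{L}^A_X \beta - \iota_Y d_A\alpha\bigr).$$
The ``only if'' direction is then immediate: taking $\alpha = \beta = 0$ reduces the bracket to $[X,Y]_A$, whose restriction to $N$ must lie in $B$, giving that $B$ is involutive; the symmetric choice yields involutivity of $ann(B)$.

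The main obstacle is the converse: assuming $B$ and $ann(B)$ are both Lie subalgebroids, I must show that the four mixed terms $\mathcal{L}^{A^*}_\alpha Y, \iota_\beta d_{A^*}X, \mathcal{L}^A_X \beta, \iota_Y d_A \alpha$ restrict along $N$ to sections of $B$ or $ann(B)$ as appropriate. My approach is to pair the $A$-part of the bracket against an arbitrary $\gamma \in \Gamma(A^*; ann(B))$ (and dually pair the $A^*$-part against $Z \in \Gamma(A;B)$), and show the result vanishes on $N$. The key ingredient is a Leibniz-type identity of the form $\gamma(\mathcal{L}^{A^*}_\alpha Y) = \rho_{A^*}(\alpha)(\gamma(Y)) - [\alpha,\gamma]_{A^*}(Y)$: on $N$, the first term vanishes since $\gamma(Y)|_N = 0$ and $\rho_{A^*}(\alpha)$ is tangent to $N$ by the anchor compatibility, while the second vanishes because $[\alpha,\gamma]_{A^*}|_N \in ann(B)$ by involutivity of $ann(B)$ and $Y|_N \in B$; analogous identities dispose of the other three mixed terms. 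This reproduces the full-support argument of \cite[Prop.~7.1]{liu1997manin}, the only new ingredient being that the established anchor compatibility allows all identities to be verified pointwise on $N$ via arbitrary ambient extensions of sections of $L$.
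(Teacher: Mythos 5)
Your argument is correct and is essentially the proof the paper intends: the paper gives no details for this proposition, remarking only that the proof follows the approach of Liu--Weinstein--Xu's Proposition 7.1, and your write-up is precisely that argument adapted to the supported case, with the tangency of the anchors to $N$ supplying the extra ingredient needed to evaluate the Leibniz-type identities along $N$. (A cosmetic point only: relative to the proposition's phrasing, the direction you label ``only if'' is in fact the ``if'' direction.)
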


\vspace{1mm}

A central result by Liu-Weinstein-Xu \cite{liu1997manin} assert that Lie bialgebroids are equivalent to Manin triples:
\begin{itemize}[left=10pt, itemsep=0.3em]
\item Given a Lie bialgebroid $(A,A^*)$, the direct sum $A\oplus A^*$ has a natural Courant algebroid structure with respect to which $A$ and $A^*$ are Dirac structures (i.e., $(A\oplus A^*, A, A^*)$ is a Manin triple),
\item For a Manin triple $(\mathbb{E}, L_1, L_2)$, the Lie algebroids $L_1$ and $L_2\cong L_1^*$ (identification via the pairing) form a Lie bialgebroid.
\end{itemize}

\vspace{1mm}

\begin{exemplo}\label{exisoCourantr-matrix}
    Given a Lie algebroid $A \Rightarrow M$ and an r-matrix $\Lambda \in \Gamma(\wedge^2A)$, denote by $A^*_{tr}$ the vector bundle $A^* \rightarrow M$ with the trivial Lie algebroid structure and by $A^*_{\Lambda}$ the vector bundle $A^* \rightarrow M$ with the Lie algebroid structure induced by the r-matrix (see Example \ref{exbialgbdr-matrix}). Therefore, $(A \oplus A_{tr}^*, A, gr(\Lambda^\sharp))$ is a Manin triple, and, by the correspondence outlined above, the Courant algebroids $A \oplus A^*_{tr}$ and $A \oplus A^*_{\Lambda} \simeq A \oplus gr(\Lambda^\sharp)$ are isomorphic.
\end{exemplo}


\section{Lie bialgebroids over Lie groupoids}\label{section3}

To pass from Lie bialgebroids to Lie bialgebroid groupoids, we need to replace Lie algebroids by LA-groupoids \cite{mackenzie1992double}, as we briefly recall.

\subsection{VB-groupoids}
A \textbf{VB-groupoid} consists of Lie groupoids $\Gamma \rightrightarrows H$, $G \rightrightarrows M$, and vector bundles $\Gamma \rightarrow G$, $H \rightarrow M$, which form the following diagram
$$
\begin{tikzcd}[
column sep={2em,between origins},
row sep={1.3em,between origins},]
	\Gamma & \rightrightarrows & H  \\
	\downarrow & & \downarrow \\
	G & \rightrightarrows & M,
\end{tikzcd}
$$
with the compatibility that the Lie groupoid structure maps of $\Gamma$ (source, target, multiplication, unit, inverse) are vector bundle maps covering the Lie groupoid structure maps of $G$. In \cite{bursztyn2016vector}, it was shown that this compatibility is equivalent to asking that the scalar multiplication of $\Gamma \rightarrow G$ is by groupoid morphisms covering the scalar multiplication on $H \rightarrow M$.

\vspace{1mm}

Any VB-groupoid has a \textbf{core} vector bundle $C \rightarrow M$, which is defined as 
$$
C = \{ u \in \Gamma \,\, \vert \,\, \mathtt{s}_{\Gamma}(u) = 0_m, p_1(u) = \epsilon_{G}(m), m \in M \},
$$
i.e. $C$ is the pullback by the identity map $\epsilon_{G} : M \rightarrow G$ of the kernel of the source map $\mathtt{s}_{\Gamma} : \Gamma \rightarrow H$, where $p_1 : \Gamma \rightarrow G$ is the vector bundle projection. An important fact, see e.g. \cite[Section 11.2]{mackenzie2005general}, is that for a VB-groupoid $\Gamma$, its dual $\Gamma^* \rightarrow G$ carries a VB-groupoid structure over the dual of the core:
\begin{equation}\label{eq:dual}
\begin{tikzcd}[
column sep={2em,between origins},
row sep={1.3em,between origins},]
\Gamma^* & \rightrightarrows & C^*  \\
\downarrow & & \downarrow \\
G & \rightrightarrows & M.
\end{tikzcd}
\end{equation}

\begin{exemplo}\label{exvbgrpdoverM}
    A VB-groupoid over the unit groupoid, 
    $$
	\begin{tikzcd}[
		column sep={2em,between origins},
		row sep={1.3em,between origins},]
		\Gamma & \rightrightarrows & H  \\
		\downarrow & {\scriptstyle C} & \downarrow \\
		M & \rightrightarrows & M,
	\end{tikzcd}
	$$
    corresponds to a vector bundle map $\partial := \mathtt{t}\vert_C : C \rightarrow H$. In this case, $\Gamma$ has the structure of an action groupoid for the action of the vector bundle $C \rightarrow M$, regarded as a Lie groupoid with respect to fiberwise addition, on $H \rightarrow M$ by $c \cdot h = h + \partial(c)$ (see \cite[\S 3]{bursztyn2021poisson} for more details).
\end{exemplo}

The direct product of VB-groupoids is naturally a VB-groupoid. Also,
if $\Gamma_1\rightrightarrows H_1$ and $\Gamma_2\rightrightarrows H_2$ are VB-groupoids over the same base $G\rightrightarrows M$, then there is a natural VB-groupoid structure on
$\Gamma_1\oplus \Gamma_2 \rightrightarrows H_1\oplus H_2$ over $G\rightrightarrows M$, see e.g. \cite{bursztyn2016vector}.

\vspace{1mm}

Let $\Gamma$ be a VB-groupoid and $\Gamma^*$ its dual VB-groupoid. Consider the following isomorphisms of VB-groupoids:
\begin{equation}\label{eq:phi}
\varphi: (\Gamma^*)^3\to  (\Gamma^*)^3, \quad (\alpha,\beta,\gamma) \mapsto (\alpha,-\beta,-\gamma), 
\end{equation}
and 
\begin{equation}\label{eq:Phi}
\Phi: (\Gamma\oplus \Gamma^*)^3\to \Gamma^3\oplus (\Gamma^*)^3, \quad  (u \oplus \alpha, v \oplus \beta, w \oplus \gamma) \mapsto (u, v, w) \oplus (\alpha, - \beta, - \gamma).
\end{equation}
We will need the following result.

\begin{proposicao}\label{prop:12}
    The following holds:
    \begin{itemize}[left=13pt, itemsep=0.3em]
\item[(a)] $\varphi(gr(m_{\Gamma^*})) = ann(gr(m_{\Gamma})).$
\item[(b)] $\Phi(gr(m_{\Gamma \oplus \Gamma^*})) = gr(m_\Gamma) \oplus ann(gr(m_\Gamma)).$
    \end{itemize}
\end{proposicao}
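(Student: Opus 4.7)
The plan is to treat part (a) as the substantive statement (which is essentially a rephrasing of the defining property of the dual VB-groupoid multiplication) and then deduce part (b) by componentwise bookkeeping on $\Gamma \oplus \Gamma^*$.

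For part (a), the starting point is the characterization of the dual groupoid structure on $\Gamma^*$: the multiplication $m_{\Gamma^*}$ is the unique VB-groupoid multiplication over $C^*$ satisfying the duality relation
$$\langle \alpha \cdot_{\Gamma^*} \beta,\, u \cdot_\Gamma v \rangle \;=\; \langle \alpha, u\rangle + \langle \beta, v\rangle$$
whenever $(u,v)$ is $\Gamma$-composable and $(\alpha,\beta)$ is $\Gamma^*$-composable over the appropriate units (see, e.g., \cite[\S 11.2]{mackenzie2005general}). First I would unpack what it means for a triple $(\alpha,\beta,\gamma) \in (\Gamma^*)^3$ to lie in $ann(gr(m_\Gamma))$: using the natural pairing of $(\Gamma^*)^3$ with $\Gamma^3$, this is the condition $\langle \alpha, u\rangle + \langle \beta, v\rangle + \langle \gamma, u\cdot v\rangle = 0$ for every composable pair $(u,v)$. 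The duality relation above then rewrites this as $\gamma = -\,\alpha \cdot_{\Gamma^*} \beta$, with $(\alpha,\beta)$ necessarily composable in $\Gamma^*$. Thus the elements of $ann(gr(m_\Gamma))$ are precisely the triples of the form $(\alpha, \beta, -\alpha\cdot_{\Gamma^*}\beta)$, which is exactly the image under $\varphi$ of $(\alpha, -\beta, \alpha\cdot_{\Gamma^*}(-\beta))$; choosing the correct parametrization (replacing $\beta$ by $-\beta$) identifies this with $\varphi(gr(m_{\Gamma^*}))$. A rank count, comparing $\mathrm{rk}(ann(gr(m_\Gamma))) = 3\,\mathrm{rk}(\Gamma) - \mathrm{rk}(gr(m_\Gamma))$ with $\mathrm{rk}(gr(m_{\Gamma^*}))$ via the standard VB-groupoid rank formula, confirms the set-theoretic equality upgrades to an equality of subbundles.

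For part (b), the key observation is that the VB-groupoid structure on $\Gamma\oplus\Gamma^*$ over $H\oplus C^*$ is the fiber-product structure: $(u\oplus\alpha)\cdot(v\oplus\beta) = (u\cdot_\Gamma v)\oplus(\alpha\cdot_{\Gamma^*}\beta)$, defined precisely when both factors are composable. Consequently
$$gr(m_{\Gamma\oplus\Gamma^*}) \;=\; \{(u\oplus\alpha,\, v\oplus\beta,\, (u\cdot v)\oplus(\alpha\cdot\beta))\},$$
and applying $\Phi$ just reshuffles each such triple to $(u,v,u\cdot v)\oplus(\alpha,-\beta,-\alpha\cdot\beta)$. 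The first summand sweeps out exactly $gr(m_\Gamma)$, while by part (a) the second summand sweeps out exactly $ann(gr(m_\Gamma))$. Since the composability conditions on the $\Gamma$- and $\Gamma^*$-components are independent, the image $\Phi(gr(m_{\Gamma\oplus\Gamma^*}))$ is precisely the direct sum $gr(m_\Gamma) \oplus ann(gr(m_\Gamma))$.

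The main obstacle I anticipate is not conceptual but notational: one must align the sign and orientation conventions — which of the three slots in $(\Gamma^*)^3$ carries the reversal, and whether the pairing in the definition of the annihilator matches the one used to define $m_{\Gamma^*}$ — so that the twist $\varphi$ with signs in the second and third components, rather than some other combination, is the one that appears. Once this bookkeeping is pinned down, both assertions reduce to direct verifications.
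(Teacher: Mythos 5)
Your overall route is the same as the paper's: part (a) rests on the defining duality relation $\langle m_{\Gamma^*}(\alpha,\beta),\, m_{\Gamma}(u,v)\rangle=\langle\alpha,u\rangle+\langle\beta,v\rangle$ of the dual VB-groupoid, and part (b) is the componentwise reshuffling under $\Phi$, using that the multiplication on $\Gamma\oplus\Gamma^*$ is the fibre-product one and that the composability conditions in the two summands are independent. The paper does exactly this, securing the reverse inclusion in (a) by the surjectivity of $m_\Gamma$ onto each fibre $\Gamma_{gh}$ (which plays the role of your rank count).

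There is, however, one step that would fail as written. In part (a) you arrive at the triples $(\alpha,\beta,-\alpha\cdot_{\Gamma^*}\beta)$ and propose to identify them with $\varphi(gr(m_{\Gamma^*}))$ by ``replacing $\beta$ by $-\beta$''. That substitution is not available: the multiplication of a VB-groupoid is linear on the fibres of the fibre product of composable pairs, not separately in each argument, so if $(\alpha,\beta)$ is composable then $(\alpha,-\beta)$ in general is not (the condition $\mathtt{s}(\alpha)=\mathtt{t}(\beta)$ is destroyed by negating only $\beta$), and in particular $\alpha\cdot_{\Gamma^*}(-\beta)\neq-(\alpha\cdot_{\Gamma^*}\beta)$. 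The correct resolution of the ``notational obstacle'' you flag is the slot convention, not a re-parametrization: the paper writes graphs with the product in the \emph{first} slot, $gr(m_\Gamma)=\{(m_\Gamma(u,v),u,v)\}$, and $\varphi$ negates the last two slots. With that convention the annihilator condition reads $\langle\gamma,m_\Gamma(u,v)\rangle=\langle\alpha,u\rangle+\langle\beta,v\rangle$ for the element $(\gamma,-\alpha,-\beta)$, which by duality and surjectivity of $m_\Gamma$ gives directly $ann(gr(m_\Gamma))=\{(m_{\Gamma^*}(\alpha,\beta),-\alpha,-\beta)\}=\varphi(gr(m_{\Gamma^*}))$, with no sign ever pushed through the groupoid multiplication. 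Once (a) is stated in this convention, your part (b) goes through verbatim and coincides with the paper's argument.
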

\begin{proof}
For part (a), we have that
	\begin{eqnarray}
	ann(gr(m_{\Gamma})) &=& \{ (\gamma, - \alpha, - \beta) \,\, \vert \,\, \langle \gamma, m_{\Gamma}(u,v) \rangle - \langle \alpha, u \rangle - \langle \beta, v \rangle  = 0 \} \nonumber \\
	&=& \{ (\gamma, - \alpha, - \beta) \,\, \vert \,\, \langle \gamma, m_{\Gamma}(u,v) \rangle - \langle m_{\Gamma^*}(\alpha, \beta), m_{\Gamma}(u,v) \rangle = 0 \} \nonumber \\
    &=& \{ (m_{\Gamma^*}(\alpha, \beta), -\alpha, -\beta) \, \vert \, \alpha, \beta \,\, \text{composable} \} \nonumber \\
	&=&  \varphi(gr(m_{\Gamma^*})), \nonumber
	\end{eqnarray}
	where in third line we use that, by definition, $\langle m_{\Gamma^*}(\alpha, \beta), m_{\Gamma}(u,v) \rangle = \langle \alpha, u \rangle + \langle \beta, v \rangle$, and in the fourth line we use that, since $\Gamma$ is a VB-algebroid, for all $w \in \Gamma_{gh}$ there exist $u \in \Gamma_g, v \in \Gamma_h$ such that $w = m_{\Gamma}(u,v)$. 

\vspace{1mm}

To verify part (b), note that by definition we have
	\begin{eqnarray}
	&& \Phi(gr(m_{\Gamma \oplus \Gamma^*})) \nonumber \\
	&=&  \{ (m_\Gamma(u,v),u,v) \oplus (m_{\Gamma^*}(\alpha, \beta), -\alpha, -\beta) \, \vert \, u \oplus \alpha \,\, \text{and} \,\, v \oplus \beta \,\, \text{are composable} \}, \nonumber \\
	&=&   \{ (m_\Gamma(u,v),u,v) \, \vert \, u,v \,\, \text{comp.} \} \oplus \{ (m_{\Gamma^*}(\alpha, \beta), -\alpha, -\beta) \, \vert \, \alpha, \beta \,\, \text{comp.} \}, \nonumber \\
	&=& gr(m_\Gamma) \oplus ann(gr(m_\Gamma)), \nonumber
	\end{eqnarray}
	as desired.
\end{proof}

\subsection{Lie bialgebroid groupoids}

An \textbf{LA-groupoid} is a diagram
   $$
	\begin{tikzcd}[
	column sep={2em,between origins},
	row sep={1.3em,between origins},]
	\Gamma & \rightrightarrows & H  \\
	\Downarrow & & \Downarrow \\
	G & \rightrightarrows & M
	\end{tikzcd}
	$$
consisting of a VB-groupoid $\Gamma \rightrightarrows H$ over $G \rightrightarrows M$ together with Lie algebroid structures $\Gamma \Rightarrow G$ and $H \Rightarrow M$, such that the Lie groupoid structure maps are Lie algebroid morphisms (for the multiplication map, we use that $\Gamma \times_H \Gamma$ inherits a Lie algebroid structure over $G \times_M G$). Note that it is equivalent to require only that $gr(m_\Gamma)$ be a Lie subalgebroid: with this, we can prove that the graph of the identity of $\Gamma$ also satisfies this property, and then show that the graphs of the source and target maps do as well.

\begin{examples}
	Tangent bundles of Lie groupoids and cotangent bundles of Poisson groupoids are examples of LA-groupoids.
\end{examples}

\begin{definicao} (\cite{bursztyn2021poisson})
A \textbf{Lie bialgebroid groupoid} is a pair $(\Gamma, \Gamma^*)$ of LA-groupoids which are in duality as VB-groupoids,
$$
\begin{tikzcd}[
column sep={2em,between origins},
row sep={1.3em,between origins},]
\Gamma & \rightrightarrows & H  \\
\Downarrow & & \Downarrow \\
G & \rightrightarrows & M,
\end{tikzcd}
\,\,\,\,\,\,\,\,\,\,\,\,\,\,\,
\begin{tikzcd}[
column sep={2em,between origins},
row sep={1.3em,between origins},]
\Gamma^* & \rightrightarrows & C^*  \\
\Downarrow & & \Downarrow \\
G & \rightrightarrows & M,
\end{tikzcd}
$$
and whose Lie algebroid structures $(\Gamma \Rightarrow G, \Gamma^* \Rightarrow G)$ form a Lie bialgebroid. 
\end{definicao}

\begin{exemplo}
    Given a Poisson groupoid $(G \rightrightarrows M, \pi)$, the tangent and cotangent spaces, $TG \rightrightarrows TM$ and $T^*G \rightrightarrows Lie(G)$, are LA-groupoids in duality, where $Lie(G)$ is the Lie algebroid of $G$. Since $(TG \Rightarrow G, T^*G \Rightarrow G)$ is a Lie bialgebroid, $(TG, T^*G)$ is a Lie bialgebroid groupoid.
\end{exemplo}


\section{Manin triples for Lie bialgebroid groupoids}\label{section4}

In this section, we provide a characterization of Lie bialgebroid groupoids in terms of Manin triples. We start by recalling the notion of Courant algebroids over Lie groupoids from \cite{li2012courant} (see also \cite{cristianthesis} and \cite{mehta2009q}).

\subsection{CA-groupoids}

Let $G \rightrightarrows M$ be a Lie groupoid.

\begin{definicao} (\cite{li2012courant})
A \textbf{CA-groupoid}, or \textbf{multiplicative Courant algebroid}, over $G$ is a VB-groupoid $\mathcal{G} \rightrightarrows H$ such that $\mathcal{G} \rightarrow G$ is a Courant algebroid,
    \begin{eqnarray}\label{cagroupoid}
	\begin{tikzcd}[
		column sep={2em,between origins},
		row sep={1.3em,between origins},]
		\mathcal{G} & \rightrightarrows & H  \\
		\downarrowtail & {\scriptstyle C} & \downarrow \\
		G & \rightrightarrows & M,
	\end{tikzcd}
	\end{eqnarray} 
and the graph of the multiplication of $\mathcal{G}$ is a Dirac structure in $\mathcal{G} \times \overline{\mathcal{G} \times \mathcal{G}}$, with support on $gr(m_{G})$.
\end{definicao}

\begin{exemplo}[Standard and exact CA-groupoids]\label{ex:standardmult}
	If $G \rightrightarrows M$ is a Lie groupoid, then the tangent lift $TG \rightrightarrows TM$ is an LA-groupoid over $G$, with dual  
    $T^*G \rightrightarrows {Lie}(G)^*$ \cite{weinstein1987symplectic}. Hence, $\mathbb{T}G = TG \oplus T^*G$ is naturally a Lie groupoid, where the graph of the multiplication $gr(m_{\mathbb{T}G})$ is a Dirac structure on $\mathbb{T}G \times \overline{\mathbb{T}G \times \mathbb{T}G}$ with support on $gr(m_G)$, so the standard Courant algebroid $\mathbb{T}G$ is a CA-groupoid (see \cite{li2012courant}). More generally, it was shown in \cite[Proposition 2.3]{alvarez2024transitive} that if one twists the Courant bracket of $TG \oplus T^*G \rightarrowtail G$ by a 3-form $H \in \Omega^3(G)$ and appropriately modifies the Lie groupoid structure of $TG \oplus T^*G \rightrightarrows TM \oplus Lie(G)^*$ by a 2-form $\omega \in \Omega^2(G_2)$, this is again a CA-groupoid if the following conditions are satisfied: $dH = 0, \delta \omega = 0$ and $d \omega = \delta H$, where $\delta$ is the simplicial differential from the Bott-Shulman-Stasheff complex. Moreover, every exact CA-groupoid can be identified with one of this type. We denote this CA-groupoid by $\mathbb{T}_H^{\omega}G$.
\end{exemplo}

\begin{definicao}
	Consider a multiplicative Courant algebroid as in \eqref{cagroupoid}. A Dirac structure $L$ in $\mathcal{G}$ is said to be \textbf{multiplicative} if $L$ is a VB-subgroupoid of $\mathcal{G}$: 
    \begin{eqnarray}\label{defmultDirac}
    \begin{tikzcd}[
	column sep={2em,between origins},
	row sep={1.3em,between origins},]
	L & \rightrightarrows & S  \\
	\downarrow & & \downarrow \\
	G & \rightrightarrows & M
\end{tikzcd}
\,\,\,\,\, \subseteq \,\,\,\,\, 
\begin{tikzcd}[
	column sep={2em,between origins},
	row sep={1.3em,between origins},]
	\mathcal{G} & \rightrightarrows & H  \\
	\downarrowtail & & \downarrow \\
	G & \rightrightarrows & M.
\end{tikzcd}
    \end{eqnarray}
    In this case, we say that the pair $(\mathcal{G}, L)$ is a \textbf{multiplicative Manin pair}. If $L_1, L_2 \subseteq \mathcal{G}$ are transverse, i.e. $\mathcal{G} = L_1 \oplus L_2$, and multiplicative Dirac structures, we say that $(\mathcal{G}, L_1, L_2)$ is a \textbf{multiplicative Manin triple}.
\end{definicao}

\begin{proposicao}\label{prop:DiracLAgrp}
    If $L$ is multiplicative Dirac on $\mathcal{G}$ as in \eqref{defmultDirac}, then it is an LA-groupoid with respect to its induced Lie algebroid structure.
\end{proposicao}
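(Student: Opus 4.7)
The plan is to apply the equivalent characterization of LA-groupoids stated right after the definition in the text: it suffices to show that $gr(m_L)$ is a Lie subalgebroid of a suitable product Lie algebroid, over $gr(m_G)$. As a Dirac structure in $\mathcal{G}$, $L$ inherits a Lie algebroid structure $L \Rightarrow G$ by Remark \ref{rem:diracLA}, via restriction of the Courant bracket and anchor. Analogously, $L \times \overline{L \times L}$ is a full-support Dirac structure in $\mathcal{G} \times \overline{\mathcal{G} \times \mathcal{G}}$, and its induced Lie algebroid structure agrees (componentwise) with the product Lie algebroid on $L \times L \times L$ over $G \times G \times G$.

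The key identification is
$$
gr(m_L) = gr(m_{\mathcal{G}}) \cap (L \times \overline{L \times L}).
$$
The inclusion $\subseteq$ is immediate; for $\supseteq$, if $u, v \in L$ are composable in $\mathcal{G}$, then $L$ being a Lie subgroupoid forces $m_{\mathcal{G}}(u,v) \in L$, so $m_{\mathcal{G}}(u,v) = m_L(u,v)$. The CA-groupoid axiom supplies the second Dirac structure: $gr(m_{\mathcal{G}})$ is Dirac in $\mathcal{G} \times \overline{\mathcal{G} \times \mathcal{G}}$ with support on $gr(m_G)$. Anchor compatibility of $gr(m_L) \subseteq L \times L \times L$ then follows from the corresponding property of $gr(m_{\mathcal{G}})$ together with the inclusion $gr(m_L) \subseteq gr(m_{\mathcal{G}})$. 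For bracket closure, I would take sections $\xi_1, \xi_2$ of $gr(m_L) \to gr(m_G)$ and extend them to sections $\tilde\xi_i$ of $L \times \overline{L \times L}$ whose restrictions to $gr(m_G)$ remain in $gr(m_L)$. The Courant bracket of the ambient $\mathcal{G} \times \overline{\mathcal{G} \times \mathcal{G}}$ applied to the $\tilde\xi_i$ coincides with the product Lie algebroid bracket on $L \times L \times L$ (by involutivity of the full-support Dirac structure $L \times \overline{L \times L}$), while involutivity of $gr(m_{\mathcal{G}})$ ensures that the restriction of that bracket to $gr(m_G)$ takes values in $gr(m_{\mathcal{G}}) \cap (L \times L \times L) = gr(m_L)$. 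This realizes $gr(m_L)$ as a Lie subalgebroid.

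The main subtlety is the clean handling of the two Dirac structures in play — one with full support, the other supported on $gr(m_G)$ — and in particular the choice of extensions $\tilde\xi_i$ that lie simultaneously in $L \times \overline{L \times L}$ globally and in $gr(m_{\mathcal{G}})$ along $gr(m_G)$; a standard tubular-neighborhood argument using that $gr(m_L)$ is a subbundle of $(L \times L \times L)\vert_{gr(m_G)}$ should suffice. Once this is established, the equivalent characterization cited above immediately upgrades $L$ to an LA-groupoid, with the Lie algebroid structure on the base $S$ being the one uniquely determined by the subalgebroid condition on $gr(m_L)$.
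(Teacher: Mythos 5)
Your proposal is correct and follows essentially the same route as the paper: the paper's proof is a one-line appeal to exactly the ingredients you use, namely the inclusion $gr(m_L)\subseteq gr(m_{\mathcal{G}})$, the fact that $gr(m_{\mathcal{G}})$ is a Dirac structure supported on $gr(m_G)$, and the full support of $L$, combined with the equivalent characterization of LA-groupoids via the graph of the multiplication. Your write-up simply makes explicit the extension-of-sections and intersection arguments that the paper leaves implicit.
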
 
\begin{proof}
    It follows immediately from the fact that $gr(m_L)\subseteq gr(m_\mathcal{G})$, with $gr(m_\mathcal{G})$ a Dirac structure over $gr(m_G)$, and $L$ is fully supported on $G$.
\end{proof}

\vspace{1mm}

The next proposition says that the compatibility between the fiberwise pairing and the Lie groupoid structure on a CA-groupoid can be expressed in terms of a Lie groupoid morphism. A similar result was proven in \cite[Lemma 3.6]{bursztyn2009linear} for skew-symmetric forms, and the same proof holds for symmetric forms.

\begin{proposicao}\label{grpdmorf}
    Suppose that $\mathcal{G}$ is a CA-groupoid as in \eqref{cagroupoid}. Then the map $(\mathcal{G} \rightrightarrows H) \rightarrow (\mathcal{G}^* \rightrightarrows C^*)$ induced by the fiberwise pairing is a Lie groupoid morphism.
\end{proposicao}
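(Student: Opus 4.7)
The plan is to show that the pairing-induced bundle isomorphism $\flat : \mathcal{G} \to \mathcal{G}^*$, $e \mapsto \langle e, \cdot \rangle$, covering the identity on $G$, intertwines the multiplications of $\mathcal{G} \rightrightarrows H$ and $\mathcal{G}^* \rightrightarrows C^*$. Once this is verified, compatibility with source, target, unit, and inverse---together with the existence of the base map $H \to C^*$---follows automatically, since these structure maps are groupoid-theoretically determined by multiplication and $\flat$ is a fiberwise linear isomorphism, hence in particular smooth.

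Intertwining multiplications is equivalent to the equality $(\flat \times \flat \times \flat)(gr(m_{\mathcal{G}})) = gr(m_{\mathcal{G}^*})$. Using Proposition \ref{prop:12}(a) together with $\varphi^2 = \mathrm{id}$, we rewrite $gr(m_{\mathcal{G}^*}) = \varphi(ann(gr(m_{\mathcal{G}})))$. The inclusion ``$\subseteq$'' then reduces to the following assertion: for every composable pair $u, v \in \mathcal{G}$ and every composable pair $u', v' \in \mathcal{G}$,
$$
\langle \flat(m_{\mathcal{G}}(u,v)), m_{\mathcal{G}}(u',v') \rangle = \langle \flat(u), u' \rangle + \langle \flat(v), v' \rangle,
$$
after which the reverse inclusion is automatic since $\flat^{\times 3}$ is bijective and the two graphs have the same rank.

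The displayed identity, however, is nothing but a restatement---via $\flat(e)(\cdot) = \langle e, \cdot \rangle$---of the Lagrangian condition for $gr(m_{\mathcal{G}}) \subseteq \mathcal{G} \times \overline{\mathcal{G} \times \mathcal{G}}$ imposed by the CA-groupoid axiom. The main obstacle is therefore only the sign bookkeeping between the Lagrangian condition on $gr(m_\mathcal{G})$ and the composition in $\mathcal{G}^*$, and this is precisely what Proposition \ref{prop:12}(a) absorbs. The argument then proceeds in direct analogy with the skew-symmetric version established in \cite[Lemma 3.6]{bursztyn2009linear}, to which the present statement is entirely parallel.
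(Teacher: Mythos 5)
Your proof is correct and is essentially the argument the paper has in mind: the paper delegates the proof to the citation of \cite[Lemma 3.6]{bursztyn2009linear}, and your reduction of the multiplicativity of $\flat$ to the isotropy of $gr(m_{\mathcal{G}})$ in $\mathcal{G} \times \overline{\mathcal{G} \times \mathcal{G}}$, with the signs absorbed by Proposition \ref{prop:12}(a), is exactly the symmetric-form analogue of that argument. One small streamlining: rather than getting the reverse inclusion from a rank count (which itself secretly uses $\mathrm{rank}\, H = \mathrm{rank}\, C$, a consequence of Lagrangianity), you can invoke the full condition $gr(m_{\mathcal{G}}) = gr(m_{\mathcal{G}})^{\perp}$ and observe that $\varphi \circ \flat^{\times 3}$ carries $gr(m_{\mathcal{G}})^{\perp}$ onto $ann(gr(m_{\mathcal{G}}))$, so that $\flat^{\times 3}$ carries $gr(m_{\mathcal{G}})$ onto $gr(m_{\mathcal{G}^*})$ on the nose.
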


\subsection{Manin triples for Lie bialgebroid groupoids}

In what follows, given a Lie algebroid $A\Rightarrow M$ with anchor $\rho_A$ and bracket $[\cdot \,,\cdot]_A$, we denote by $A^{op}$ the opposite Lie algebroid, defined by the same vector bundle $A\to M$ with anchor and bracket given by $-\rho_A$ and $-[\cdot \,,\cdot]_A$, so that $-Id: A\to A^{op}$ is a Lie algebroid isomorphism. Note that for Manin triples $(\mathbb{E}, A, B)$ and $(\overline{\mathbb{E}}, A, B)$, the Lie algebroid structures on $A^*$ coming from its identification with $ B$ via the respective pairings are opposite to one another.

\vspace{1mm}

Given a Lie bialgebroid $(\Gamma,\Gamma^*)$, consider the Courant algebroid $\Gamma \oplus \Gamma^*$ given by its Drinfeld double, and the product Courant algebroid
$$
\mathbb{E} = (\Gamma \oplus \Gamma^*) \times \overline{(\Gamma \oplus \Gamma^*) \times (\Gamma \oplus \Gamma^*)}.
$$
Then  $A = (\Gamma \oplus 0)^3 \simeq \Gamma^3$ and $B = (0 \oplus \Gamma^*)^3 \simeq (\Gamma^*)^3$ are Dirac structures in $\mathbb{E}$, so that $(\mathbb{E}, A, B)$ is a Manin triple, and the identification of the Courant algebroid $\mathbb{E}$ with the double $A\oplus A^*$ induced by the pairing is given by the map \eqref{eq:Phi},
\begin{eqnarray}
\Phi : (\Gamma \oplus \Gamma^*) \times \overline{(\Gamma \oplus \Gamma^*) \times (\Gamma \oplus \Gamma^*)} & \longrightarrow & (\Gamma \times \Gamma \times \Gamma) \oplus (\Gamma^* \times (\Gamma^*)^{op} \times (\Gamma^*)^{op}) \nonumber \\
(u \oplus \alpha, v \oplus \beta, w \oplus \gamma) & \longmapsto & (u, v, w) \oplus (\alpha, - \beta, - \gamma). \nonumber
\end{eqnarray}
Note also that the map \eqref{eq:phi} defines a  Lie algebroid isomorphism:  
\begin{eqnarray}
\varphi: \Gamma^* \times \Gamma^* \times \Gamma^* &\longrightarrow& \Gamma^* \times (\Gamma^*)^{op} \times (\Gamma^*)^{op} \nonumber \\
(\alpha, \beta, \gamma) &\longmapsto& (\alpha, -\beta, -\gamma). \nonumber
\end{eqnarray}

\vspace{2mm}

Now we are able to prove our main Theorem.

\begin{teorema}\label{teorema1}
{\em The Drinfeld double of a Lie bialgebroid groupoid $(\Gamma, \Gamma^*)$ is a CA-groupoid, so that $(\Gamma \oplus \Gamma^*, \Gamma, \Gamma^*)$ is a multiplicative Manin triple. Conversely, given a multiplicative Manin triple $(\mathcal{G}, L_1, L_2)$, $(L_1, L_2)$ is a Lie bialgebroid groupoid, where $L_2$ is identified with $L_1^*$ via the pairing of $\mathcal{G}$.} 
\end{teorema}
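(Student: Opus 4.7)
The plan is to reduce the theorem to Proposition \ref{proposicao} applied to an auxiliary product Lie bialgebroid, transferring information via the isomorphisms $\varphi$ and $\Phi$ of \eqref{eq:phi} and \eqref{eq:Phi}.

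For the forward direction, I would start with a Lie bialgebroid groupoid $(\Gamma, \Gamma^*)$. The Liu-Weinstein-Xu theorem gives the Courant algebroid $\Gamma \oplus \Gamma^* \rightarrowtail G$, and the sum of the two VB-groupoid structures makes $\Gamma \oplus \Gamma^* \rightrightarrows H \oplus C^*$ a VB-groupoid over $G \rightrightarrows M$. To establish the CA-groupoid condition, I would first check that $(\Gamma^3, \Gamma^* \times (\Gamma^*)^{op} \times (\Gamma^*)^{op})$ is a Lie bialgebroid (the bialgebroid axiom is stable under products and under replacing a component by its opposite) and that $\Phi$ is a Courant algebroid isomorphism from $\mathbb{E} := (\Gamma \oplus \Gamma^*) \times \overline{(\Gamma \oplus \Gamma^*) \times (\Gamma \oplus \Gamma^*)}$ onto the Drinfeld double of this auxiliary bialgebroid. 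The pairing compatibility for $\Phi$ is essentially already in the proof of Proposition \ref{prop:12}, while the bracket compatibility reduces to $\varphi$ being a Lie algebroid isomorphism onto $\Gamma^* \times (\Gamma^*)^{op} \times (\Gamma^*)^{op}$. With this in place, Proposition \ref{prop:12}(b) reads $\Phi(gr(m_{\Gamma \oplus \Gamma^*})) = gr(m_\Gamma) \oplus ann(gr(m_\Gamma))$, and Proposition \ref{proposicao} reduces the CA-groupoid property to the two statements that $gr(m_\Gamma)$ is a Lie subalgebroid of $\Gamma^3$ and that $ann(gr(m_\Gamma))$ is a Lie subalgebroid of $\Gamma^* \times (\Gamma^*)^{op} \times (\Gamma^*)^{op}$. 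The former holds because $\Gamma$ is an LA-groupoid; the latter follows from Proposition \ref{prop:12}(a), which identifies $ann(gr(m_\Gamma)) = \varphi(gr(m_{\Gamma^*}))$, combined with the fact that $\Gamma^*$ is an LA-groupoid and $\varphi$ is a Lie algebroid isomorphism. It is then manifest that $\Gamma$ and $\Gamma^*$ are multiplicative Dirac structures in $\Gamma \oplus \Gamma^*$.

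For the converse, given a multiplicative Manin triple $(\mathcal{G}, L_1, L_2)$, Proposition \ref{prop:DiracLAgrp} provides LA-groupoid structures on $L_1$ and $L_2$. By Proposition \ref{grpdmorf}, the pairing-induced map $\mathcal{G} \to \mathcal{G}^*$ is a VB-groupoid morphism, and it restricts to a VB-groupoid isomorphism $L_2 \xrightarrow{\sim} ann(L_2) \simeq L_1^*$, exhibiting $L_1$ and $L_2$ as LA-groupoids in duality. The classical Liu-Weinstein-Xu correspondence applied to $\mathcal{G} = L_1 \oplus L_2$ then yields that $(L_1, L_1^*)$ is a Lie bialgebroid, completing the required data of a Lie bialgebroid groupoid. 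I expect the principal technical obstacle to be the justification that $\Phi$ is genuinely a Courant algebroid isomorphism onto the specified Drinfeld double; once the sign and ``op'' bookkeeping on the target Lie bialgebroid is sorted out, the rest of the argument assembles directly from Propositions \ref{proposicao}, \ref{prop:12}, \ref{prop:DiracLAgrp}, and \ref{grpdmorf}.
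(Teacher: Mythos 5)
Your proposal follows essentially the same route as the paper's proof: the forward direction uses the identification $\Phi$ together with Proposition \ref{prop:12} to reduce the Dirac condition on $gr(m_{\Gamma\oplus\Gamma^*})$ to Proposition \ref{proposicao} applied to $gr(m_\Gamma)\oplus ann(gr(m_\Gamma))$, with $\varphi$ and Proposition \ref{prop:12}(a) handling $ann(gr(m_\Gamma))$, and the converse assembles from Propositions \ref{prop:DiracLAgrp} and \ref{grpdmorf} exactly as in the paper. The step you flag as the main obstacle --- that $\Phi$ is a Courant algebroid isomorphism onto the Drinfeld double of the auxiliary product bialgebroid --- is precisely what the paper establishes in the discussion preceding the theorem, so your argument is complete and correct.
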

\begin{proof}
    Let $(\Gamma, \Gamma^*)$ be a Lie bialgebroid groupoid. Then $\Gamma\oplus \Gamma^*$ is a Courant algebroid which is also a VB-groupoid,
\begin{equation}\label{eq:CA}
\begin{tikzcd}[
	column sep={2.5em,between origins},
	row sep={1.3em,between origins},]
	\Gamma \oplus \Gamma^* & \rightrightarrows & H \oplus C^*  \\
	\downarrowtail & & \downarrow \\
	G & \rightrightarrows & M.
\end{tikzcd}
\end{equation}
To check that this is a CA-groupoid, we need to show that $gr(m_{\Gamma \oplus \Gamma^*})$ is a Dirac structure in 
    $
    (\Gamma \oplus \Gamma^*) \times \overline{((\Gamma \oplus \Gamma^*) \times (\Gamma \oplus \Gamma^*))}
    $ 
with support on $gr(m_G)$. Using the identification $\Phi$ above and Prop.~\ref{prop:12} (b), this is equivalent to showing that $gr(m_{\Gamma}) \oplus ann(gr(m_{\Gamma}))$ is Dirac on
	$
	(\Gamma \times \Gamma \times \Gamma) \oplus (\Gamma^* \times (\Gamma^*)^{op} \times (\Gamma^*)^{op}).
	$
Since $\Gamma$ and $\Gamma^*$ are LA-groupoids, $m_{\Gamma}$ and $m_{\Gamma^*}$ are Lie algebroid morphisms, and then $gr(m_{\Gamma})$ and $gr(m_{\Gamma^*})$ are Lie subalgebroids over $gr(m_G)$. Using $\varphi$ above and Prop.~\ref{prop:12} (a), it follows that $ann(gr(m_{\Gamma}))$ is also a Lie subalgebroid over $gr(m_G)$. By Prop. \ref{proposicao}, $gr(m_{\Gamma}) \oplus ann(gr(m_{\Gamma}))$ is Dirac on $(\Gamma \times \Gamma \times \Gamma) \oplus (\Gamma^* \times (\Gamma^*)^{op} \times (\Gamma^*)^{op})$, so \eqref{eq:CA} is a CA-groupoid. Finally note that $\Gamma$ and $\Gamma^*$ sit in $\Gamma\oplus \Gamma^*$ as Dirac structures and VB-subgroupoids, so 
$(\Gamma \oplus \Gamma^*, \Gamma, \Gamma^*)$ is a multiplicative Manin triple. 

\vspace{1mm}

Conversely, let $(\mathcal{G}, L_1, L_2)$ be a multiplicative Manin triple. By 
Prop. \ref{prop:DiracLAgrp}, $L_1$ and $L_2$ inherit the structure of LA-groupoids.
By Prop.~\ref{grpdmorf}, we have an identification of  $L_2$ with $L_1^*$ as VB-groupoids via the pairing, and under such identification $(L_1 \Rightarrow G, L_1^* \Rightarrow G)$ is a Lie bialgebroid, hence $(L_1,L_1^*)$ is a Lie bialgebroid groupoid. 
\end{proof}

\begin{exemplo}\label{exCAgrpdtangcotg}
	Given a Poisson groupoid $(G \rightrightarrows M, \pi)$, the tangent and cotangent spaces are LA-groupoids in duality, and $(TG \Rightarrow G, T^*G \Rightarrow G)$ form a Lie bialgebroid, then its double is a CA-groupoid:
	$$
	\begin{tikzcd}[
		column sep={2em,between origins},
		row sep={1.3em,between origins},]
		TG & \rightrightarrows & TM \\
		\Downarrow & & \Downarrow \\
		G & \rightrightarrows & M
	\end{tikzcd}
	\,\, \oplus \,\,
	\begin{tikzcd}[
		column sep={2em,between origins},
		row sep={1.3em,between origins},]
		T^*G & \rightrightarrows & \,\,\,\,\,\,\,\, Lie(G)^*  \\
		\Downarrow & & \,\,\, \Downarrow \\
		G & \rightrightarrows & \,\,\, M
	\end{tikzcd} \,\, = \,\,
     \begin{tikzcd}[
			column sep={2em,between origins},
			row sep={1.3em,between origins},]
			TG \oplus T^*G \,\,\,\,\,\,\,\,\,\,\,\,\,\,\,\,\, & \rightrightarrows & \,\,\,\,\,\,\,\,\,\,\,\,\,\,\,\,\,\,\,\,\,\,\,\,\, TM \oplus Lie(G)^*  \\
			\downarrowtail \,\,\,\,\,\,\,\,\,\,\,\,\,\,\,\,\,\,\, & & \,\,\,\,\,\,\,\,\,\,\,\,\,\,\,\,\,\,\,\,\,\, \downarrow \\
			G \,\,\,\,\,\,\,\,\,\,\,\,\,\,\,\,\, & \rightrightarrows & \,\,\,\,\,\,\,\,\,\,\,\,\,\,\,\,\,\,\,\,\,\, M \nonumber
		\end{tikzcd},
	$$
    where $Lie(G)$ is the Lie algebroid of $G$. This Courant algebroid arising as the double is isomorphic to the standard one in Example \ref{ex:standardmult}. 
\end{exemplo}

\begin{exemplo} 
  Let $\mathbb{T}_HG$ denote the exact CA-groupoid of Example~\ref{ex:standardmult} with $\omega=0$. If $H = d \eta$, for some $\eta \in \Omega^2(G)$ multiplicative, then the CA-groupoid $\mathbb{T}_HG$ is given by the double of a Lie bialgebroid groupoid. Indeed, $L:= gr(\eta^\sharp)$ is a multiplicative Dirac structure  and  $(\mathbb{T}_HG, T^*G, L)$ is a multiplicative Manin triple. 
\end{exemplo}

\begin{exemplo}\label{expairgrpdglob}
    Let $(A \Rightarrow M, A^* \Rightarrow M)$ be a Lie bialgebroid and consider the Lie bialgebroid groupoid $(A \times A, A^* \times A^*)$ by equipping it with the structure of the pair groupoid. By Theorem \ref{teorema1}, its double is a CA-groupoid as follows:
    $$
	\begin{tikzcd}[
		column sep={2em,between origins},
		row sep={1.3em,between origins},]
		A \times A \,\,\,\,\,\,\,\,\,\,\, & \rightrightarrows & A \\
		\Downarrow \,\,\,\,\,\,\,\,\,\,\, & & \Downarrow \\
		M {\times} M \,\,\,\,\,\,\,\,\,\,\, & \rightrightarrows & M
	\end{tikzcd}
	\,\,\, \oplus \,\,\,
	\begin{tikzcd}[
		column sep={2em,between origins},
		row sep={1.3em,between origins},]
		A^* \times A^* \,\,\,\,\,\,\,\,\,\,\, & \rightrightarrows & A^*  \\
		\Downarrow \,\,\,\,\,\,\,\,\,\,\, & &  \Downarrow \\
		M {\times} M \,\,\,\,\,\,\,\,\,\,\, & \rightrightarrows &  M
	\end{tikzcd}
    \quad = \quad
    \begin{tikzcd}[
		column sep={2em,between origins},
		row sep={1.3em,between origins},]
		\mathbb{E} \times \mathbb{E} \,\,\,\,\,\,\,\,\,\,\, & \rightrightarrows & \,\,\,\,\,\,\,\,\,\, A \oplus A^* \\
		\downarrowtail \,\,\,\,\,\,\,\,\,\,\, & & \,\,\,\,\,\,\,\,\,\, \downarrow \\
		M {\times} M \,\,\,\,\,\,\,\,\,\,\, &  \rightrightarrows & \,\,\,\,\,\,\,\,\,\, M
	\end{tikzcd},
	$$
    where $\mathbb{E} := A \oplus A^* \rightarrowtail M$ is the Courant algebroid given by the double.
\end{exemplo}

\begin{exemplo}
    Given a CA-groupoid $\mathcal{G} \rightrightarrows H$ over $G \rightrightarrows M$, and $\Phi : (K \rightrightarrows N) \rightarrow (G \rightrightarrows M)$ a Lie groupoid morphism such that $\Phi$ is transverse to the anchor map $\rho : \mathcal{G} \rightarrow TG$, i.e. $im(d \Phi) + im(\rho) = TG$, then the pullback Courant algebroid $\Phi^{!}\mathcal{G} \rightrightarrows \Phi^*H$ inherits the structure of a CA-groupoid over $K \rightrightarrows N$ (see \cite[\S 2]{li2014dirac}). If the CA-groupoid $\mathcal{G}$ is given by the double of a Lie bialgebroid groupoid $(A, A^*)$ such that the Lie algebroids $A \Rightarrow G$ and $A^* \Rightarrow G$ are transitive, then $L_1 := (A \oplus 0) \times (TK \oplus 0)$ and $L_2 := (0 \oplus A^*) \times (0 \oplus  T^*K)$ are transverse and multiplicative Dirac structures on $(A \oplus A^*) \times \mathbb{T}K$, and then, by \cite[Proposition 2.15]{li2014dirac}, $\frac{L_1 \cap C}{L_1 \cap C^{\perp}}$ and $\frac{L_2 \cap C}{L_2 \cap C^{\perp}}$ are transverse and multiplicative Dirac structures on $\Phi^{!}(A \oplus A^*)$. Therefore, by Theorem \ref{teorema1}, $\big(\frac{L_1 \cap C}{L_1 \cap C^{\perp}}, \frac{L_2 \cap C}{L_2 \cap C^{\perp}}\big)$ is a Lie bialgebroid groupoid. This approach allows the construction of new Lie bialgebroid groupoids from previous ones.
\end{exemplo}


\section{Application: Co-quadratic Lie algebroids revisited}\label{section5}

The notion of co-quadratic Lie algebroid  and their Dirac structures were introduced in \cite{lang2021lie} to provide a Manin triple framework for Lie bialgebroid crossed modules.
In this section, we will show that these can be cast as Manin triples over the unit groupoid
and that \cite[Theorem 3.7]{lang2021lie}, which states that Lie bialgebroid crossed modules correspond to co-quadratic Manin triples, follows as a particular case of Theorem \ref{teorema1}.

\begin{definicao}
    A \textbf{co-quadratic Lie algebroid} is a Lie algebroid $K \Rightarrow M$ together with a symmetric, bilinear 2-form on the dual vector space:
    $$
    \lfloor \,\, , \, \rfloor : \Gamma(K^*) \times \Gamma(K^*) \rightarrow C^{\infty}(M),
    $$
    which is $K$-invariant, i.e., for all $k \in \Gamma(K)$, $\gamma, \gamma' \in \Gamma(K^*)$,
    $$
    \rho(k)\lfloor \gamma, \gamma' \rfloor = \lfloor \mathcal{L}_k \gamma , \gamma' \rfloor + \lfloor \gamma , \mathcal{L}_k \gamma' \rfloor,
    $$
    where $\rho$ is the anchor of $K$. A \textbf{Dirac structure} on $K$ is a Lie subalgebroid $D \Rightarrow M$ of $K$ such that the null space
    $$
    D^0 = \{ \gamma \in K^*_p \, \vert \, p \in M, \langle \gamma, g \rangle = 0, \, \forall d \in D_p \}
    $$
    is isotropic with respect to $\lfloor \,\, , \, \rfloor$. We call $(K, P, Q)$ a \textbf{co-quadratic Manin triple} if $K$ is a co-quadratic Lie algebroid and $P,Q$ are two Dirac structures on $K$ which are transversal to each other.
\end{definicao}

Although the objects defined above may initially appear unrelated to double structures, the following result demonstrates otherwise.

\begin{teorema}\label{teorema_co-quadLA}
{\em     The following statements hold:}
    \begin{enumerate}[left=10pt, itemsep=0.3em]
        \item[(1)] {\em Co-quadratic Lie algebroids are in correspondence with CA-groupoids over the unit groupoid $M \rightrightarrows M$.}
        \item[(2)] {\em Dirac structures on co-quadratic Lie algebroids are in correspondence with multiplicative Dirac structures over the unit groupoid $M \rightrightarrows M$.}
        \item[(3)] {\em Co-quadratic Manin triples are in correspondence with multiplicative Manin triples over the unit groupoid $M \rightrightarrows M$.}
    \end{enumerate}
\end{teorema}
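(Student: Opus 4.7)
The plan is to work in the explicit model from Example~\ref{exvbgrpdoverM}: a VB-groupoid $\mathcal{G}\rightrightarrows H$ over $M\rightrightarrows M$ with core $C$ is the action groupoid of a bundle map $\partial : C\to H$, so $\mathcal{G}\cong H\oplus C$ as a vector bundle with source $(h,c)\mapsto h$, target $(h,c)\mapsto h+\partial c$, and product $(h+\partial c_2,c_1)\cdot (h,c_2)=(h,c_1+c_2)$. In these coordinates the three pieces of the Courant structure on $\mathcal{G}$ --- the pairing, the anchor, and the bracket, together with the multiplicativity requirement that $gr(m_\mathcal{G})$ be Dirac over $gr(m_M)=\Delta_M$ --- each translate into a piece of co-quadratic data for $K:=H$, with $C\cong K^*$ and $\partial$ corresponding to $\lfloor\,,\rfloor$.

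For part~(1), I would proceed in the order pairing, anchor, bracket. First, by Proposition~\ref{grpdmorf} the map $\#:\mathcal{G}\to\mathcal{G}^*$ induced by the pairing is a VB-groupoid morphism from $H\oplus C$ to $C^*\oplus H^*$; imposing this on the action-groupoid formulas above forces
\[
\langle (h,c),(h',c')\rangle \;=\; b(h,c')+b(h',c)+b(\partial c,c'),
\]
where $b:H\otimes C\to \mathbb{R}$ is nondegenerate with $b(\partial c,c')=b(\partial c',c)$. Setting $K:=H$ and identifying $C\cong K^*$ via $b$, the map $\partial:K^*\to K$ is self-dual and so corresponds bijectively to a symmetric bilinear form $\lfloor\,,\rfloor$ on $K^*$. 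Next, the anchor compatibility of $gr(m_\mathcal{G})$ over $\Delta_M$ forces $\rho(h,c)=\rho_K(h)$ with $\rho_K\circ\partial=0$. Finally, involutivity of $gr(m_\mathcal{G})$ is tested on ``side'' triples $(s_h,s_h,s_h)$ with $s_h=(h,0)$ and on mixed side--core triples: the side case yields that $[\![s_h,s_{h'}]\!]$ lies in $H\oplus 0$, defining the Lie algebroid bracket on $K$ with anchor $\rho_K$, while the mixed case produces the identity $[k,\partial\xi]_K=\partial(\mathcal{L}_k\xi)$ and, combined with axiom~(C2) and the extra $b(\partial c,c')$ term in the pairing, yields the $K$-invariance of $\lfloor\,,\rfloor$. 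The converse construction reverses this dictionary: given $(K,\lfloor\,,\rfloor)$, take $\mathcal{G}=K\oplus K^*$ with the pairing above, the action-groupoid VB-structure with $\partial=\lfloor\,\rfloor^\sharp$, anchor $\rho_K\circ\mathrm{pr}_K$, and the Courant bracket forced by the Lie algebroid of $K$, its action on $K^*$, and axioms~(C2)--(C3) adapted to the modified pairing.

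For part~(2), a multiplicative Dirac $L\subset\mathcal{G}$ is a VB-subgroupoid, so $L=L_H\oplus L_C$ with $L_H\subset H=K$, $L_C\subset C\cong K^*$, and $\partial(L_C)\subset L_H$. Writing Lagrangianness against the pairing from~(1) gives $L_C$ isotropic for $\lfloor\,,\rfloor$ and $L_H\subset L_C^0$; the rank count $\dim L=\tfrac12\dim\mathcal{G}$ upgrades the inclusion to $L_H=L_C^0$, and $\partial(L_C)\subset L_H$ then follows automatically from isotropy. Hence $L=D\oplus D^0$ is determined by $D:=L_H$, and involutivity of the Courant bracket on $L$ reduces --- via the same side-triple analysis --- to $D$ being a Lie subalgebroid of $K$. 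This is precisely the co-quadratic Dirac condition. Part~(3) is then immediate: the transversality $\mathcal{G}=L_1\oplus L_2$ with $L_i=D_i\oplus D_i^0$ is equivalent to $K=D_1\oplus D_2$, matching the definition of a co-quadratic Manin triple.

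The main obstacle is the bracket half of~(1). Under the modified pairing, $H=K$ is isotropic but not Lagrangian in $\mathcal{G}$, so Proposition~\ref{prop:DiracLAgrp} cannot be invoked to produce the Lie algebroid on $K$; the bracket must instead be read off the $H$-component of the Courant bracket on side sections, with a separate verification that the core component vanishes. The more delicate point is matching the ``geometric'' involutivity of $gr(m_\mathcal{G})$ on mixed triples against the ``algebraic'' $K$-invariance identity --- this is exactly what the extra $b(\partial c,c')$ term in the pairing is tailored to encode.
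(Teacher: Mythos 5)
Your proposal follows essentially the same route as the paper: the paper proves part (1) as Proposition \ref{caracCAgrpdoverM}, using exactly the action-groupoid model of Example \ref{exvbgrpdoverM}, the identification $C\simeq K^*$ via the pairing, and the translation of isotropy/involutivity/anchor-compatibility of $gr(m_{\mathcal G})$ into the symmetry and $K$-invariance of $\partial$; parts (2) and (3) are the paper's Proposition \ref{multMT=coquadMT}, where a multiplicative Dirac structure is decomposed as $D^0\oplus D$ just as in your $L=L_H\oplus L_C$ analysis, with $\partial(D^0)\subseteq D$ extracted from isotropy of $D^0$ in the same way.

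One correction to your closing paragraph: the side bundle $K=H\oplus 0$ \emph{is} Lagrangian for the modified pairing. With $\langle (h,c),(h',c')\rangle=b(h,c')+b(h',c)+b(\partial c,c')$ and $b$ nondegenerate, one computes $(H\oplus 0)^{\perp}=\{(h',c'): b(h,c')=0\ \forall h\}=H\oplus 0$; it is the core $C$ whose self-pairing $b(\partial c,c')$ generally fails to vanish. Consequently the ``obstacle'' you identify is not there: the paper simply cites the fact that the unit subbundle of a CA-groupoid is a Dirac structure (with full support here, since the base is $M\rightrightarrows M$), so $K$ inherits its Lie algebroid structure directly, as in Remark \ref{rem:diracLA}. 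Your workaround of reading the bracket off the $H$-component of side sections is not wrong, just unnecessary, and your appeal to isotropy-but-not-Lagrangianness of $K$ should be deleted. The rest of the argument, including the rank count forcing $L_H=L_C^{0}$ and the observation that $\partial(L_C)\subseteq L_H$ follows from isotropy of $L_C$, matches the paper's proof.
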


\vspace{1mm}

In the following, we will prove the previous theorem by explaining how such corres\-pondences are obtained. We begin by characterizing CA-groupoids over the unit groupoid $M \rightrightarrows M$. Note that if ($K \Rightarrow M$, $\rho_K$, $[ \,\, , \, ]_K$) is a Lie algebroid and $\partial : K^* \rightarrow K$ is a symmetric $K$-invariant linear map, we can construct a CA-groupoid structure by taking the Courant algebroid $K^* \oplus K \rightarrowtail M$ defined by the following pairing, anchor and bracket:
    \begin{itemize}[left=10pt, itemsep=0.3em]
        \item $( \gamma \oplus k, \gamma' \oplus k' ) = \frac{1}{2} (\langle \gamma , k' \rangle + \langle \gamma' , k \rangle + \langle \gamma , \partial \gamma' \rangle)$, 
        \item $\rho(\gamma \oplus k) = \rho_K(k)$
        \item $[\gamma \oplus k, \gamma' \oplus k'] = \mathcal{L}_k \gamma' - \mathcal{L}_{k'} \gamma + \mathcal{L}_{\partial \gamma} \gamma' + 2\rho^*d(\gamma, k') \oplus [k,k']_K$, 
    \end{itemize}
for $k, k' \in \Gamma(K)$, $\gamma, \gamma' \in \Gamma(K^*)$, and the Lie groupoid structure $K^* \oplus K \rightrightarrows K$ with the following structure maps:
    \begin{itemize}[left=10pt, itemsep=0.3em]
        \item $\mathtt{s}(\gamma \oplus k) = k$,
        \item $\mathtt{t}(\gamma \oplus k) = k + \partial \gamma$,
        \item $m((\gamma, k + \partial \gamma'),(\gamma', k)) = \gamma + \gamma' \oplus k$,
    \end{itemize}
for $k \in K$, $\gamma, \gamma' \in K^*$. The next result tells us that all CA-groupoids over the unit groupoid $M \rightrightarrows M$ are of this type.

\vspace{1mm}

\begin{proposicao}\label{caracCAgrpdoverM}
{\em Every CA-groupoid over the unit groupoid $M \rightrightarrows M$ as follows
    \begin{eqnarray}
	\begin{tikzcd}[
		column sep={2em,between origins},
		row sep={1.3em,between origins},]
		\mathcal{G} & \rightrightarrows & K  \\
		\downarrowtail & {\scriptstyle C} & \downarrow \\
		M & \rightrightarrows & M \nonumber
	\end{tikzcd}
	\end{eqnarray}
is equivalent to the following data: a Lie algebroid $K \Rightarrow M$ and a $K$-invariant symmetric linear map $\partial : K^* \rightarrow K$, i.e. $\partial$ satisfies the following identities:}
    \begin{enumerate}[left=10pt, itemsep=0.3em]
        \item[(a)] {\em $\partial=\partial^*$,}
        \item[(b)] {\em $\rho(k) \langle \partial \gamma , \gamma' \rangle = \langle \mathcal{L}_k \gamma , \partial \gamma' \rangle + \langle \mathcal{L}_k \gamma' , \partial \gamma \rangle$, for $k \in \Gamma(K)$, $\gamma \in \Gamma(K^*)$.}
    \end{enumerate}
\end{proposicao}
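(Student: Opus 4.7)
The plan is to prove both directions of the equivalence. For the forward direction, the goal is to extract the data $(K, \partial)$ from a CA-groupoid $\mathcal{G} \rightrightarrows K$ over $M \rightrightarrows M$ and then verify conditions (a) and (b). First, since $\mathcal{G} \rightrightarrows K$ is a VB-groupoid over the unit groupoid, Example \ref{exvbgrpdoverM} identifies it with the action groupoid determined by a bundle map $\partial_0 := \mathtt{t}|_C : C \to K$ from the core $C$; in particular $\mathcal{G} = K \oplus C$ as vector bundles over $M$. To identify $C$ with $K^*$, I would invoke Proposition \ref{grpdmorf}: the fiberwise pairing induces a VB-groupoid morphism $\mathcal{G} \to \mathcal{G}^*$ that is an isomorphism by non-degeneracy. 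Since $\mathcal{G}^*$ has base $C^*$, this forces an isomorphism $K \cong C^*$, equivalently $C \cong K^*$, and $\partial_0$ becomes the desired map $\partial : K^* \to K$.

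Next I would pin down the form of the pairing. Because the pairing map $\mathcal{G} \to \mathcal{G}^*$ sends identity elements to identity elements, and identity elements of $\mathcal{G}^*$ are precisely the functionals on $\mathcal{G}$ that vanish on the base $K$, the subbundle $K \subseteq \mathcal{G}$ must be isotropic; a dimension count then shows $K$ is Lagrangian. Writing $\mathcal{G} = K \oplus K^*$, the CA pairing is then determined by its restriction to $K^* \otimes K^*$, and the VB-groupoid compatibility identifies this restriction with $\tfrac{1}{2}\langle \gamma, \partial \gamma' \rangle$, reproducing the displayed formula. Symmetry of the CA pairing becomes $\langle \gamma, \partial\gamma'\rangle = \langle \gamma', \partial \gamma\rangle$, which is exactly condition (a), $\partial = \partial^*$. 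For the Lie algebroid structure on $K \Rightarrow M$, I would observe that $K \subseteq \mathcal{G}$ is a Lagrangian VB-subgroupoid (the trivial subgroupoid $K \rightrightarrows K$) which, by the multiplicativity of the Courant bracket, is also involutive; hence $K$ is a multiplicative Dirac structure, and Proposition \ref{prop:DiracLAgrp} endows it with an LA-groupoid structure over $M \rightrightarrows M$, i.e. simply a Lie algebroid structure on $K \Rightarrow M$.

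Condition (b) then falls out of Courant algebroid axiom (C2) applied with $e_1 = k \in \Gamma(K)$ and $e_2, e_3 \in \Gamma(K^*)$: computing the Courant bracket $[\![k, \gamma]\!] = \mathcal{L}_k \gamma$ (derivable from the CA axioms together with the Lie algebroid structure on $K$) and substituting into (C2) yields precisely $\rho(k)\langle \partial \gamma, \gamma'\rangle = \langle \mathcal{L}_k \gamma, \partial \gamma'\rangle + \langle \mathcal{L}_k \gamma', \partial \gamma\rangle$. The reverse direction is then a direct verification: given $K \Rightarrow M$ and $\partial : K^* \to K$ satisfying (a) and (b), one defines the Courant and groupoid structures on $\mathcal{G} = K \oplus K^*$ by the formulas stated in the text preceding the proposition and checks the axioms, with (a) ensuring symmetry of the pairing, (b) ensuring (C2), and the remaining CA axioms reducing to the Jacobi and Leibniz identities for $K$.

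The main obstacle I expect is the verification that $gr(m_\mathcal{G})$ is a Dirac structure with support on $gr(m_M) = M$ in $\mathcal{G} \times \overline{\mathcal{G} \times \mathcal{G}}$; this is what upgrades the data $(K, \partial)$ to a CA-groupoid in the reverse direction, and dually it is what pins down the explicit form of the Courant bracket on core sections in the forward direction. This is a multiplicativity-type check that combines the Jacobi identity on $K$ with the symmetric invariance of $\partial$, and the bookkeeping involved is the only non-routine step of the argument.
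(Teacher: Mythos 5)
Your overall architecture matches the paper's: extract the VB-groupoid data $(K,C,\partial)$ via Example \ref{exvbgrpdoverM}, identify $C\cong K^*$, observe that $K$ is a Dirac structure and hence a Lie algebroid, read off (a) from the symmetry of the pairing, and obtain (b) from axiom (C2). Your route to the identification $C\cong K^*$ through Proposition \ref{grpdmorf} (the pairing map $\mathcal{G}\to\mathcal{G}^*$ is a groupoid isomorphism matching the bases $K$ and $C^*$ and sending units to units, whence $K$ is Lagrangian) is a clean alternative to the paper, which instead deduces the isotropy of $K$ and the normalization $\langle\gamma,k\rangle=2(\gamma,k)$ directly from the isotropy of $gr(m_{\mathcal{G}})$, i.e.\ from the conditions it labels $(1.1)$ and $(1.2)$.

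The gap is in your derivation of (b). You assert that $[\![ k,\gamma ]\!]=\mathcal{L}_k\gamma$ is ``derivable from the CA axioms together with the Lie algebroid structure on $K$''. It is not. Axiom (C2) paired against sections of $K$ determines only the $K^*$-component of $[\![ k,\gamma ]\!]$ (which is indeed $\mathcal{L}_k\gamma$); the $K$-component $v$ is unconstrained by the Courant axioms, and substituting $[\![ k,\gamma ]\!]=\mathcal{L}_k\gamma+v$ into (C2) with $e_2,e_3\in\Gamma(K^*)$ yields (b) only up to the error term $\langle\gamma',v\rangle+\langle\gamma,v'\rangle$. Killing that term requires the involutivity of $gr(m_{\mathcal{G}})$ --- the multiplicativity of the bracket --- which is exactly what the paper unpacks into its conditions $(2.1)$--$(2.3)$ and feeds into the computation of (b) through $(2.2)$, namely $[k,\partial c]=\partial[k,c]$. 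The point is not cosmetic: take the Drinfeld double of a Lie bialgebra $(\mathfrak{g},\mathfrak{g}^*)$ with $\mathfrak{g}^*$ nonabelian and equip it with the fiberwise-addition groupoid structure $\mathfrak{g}\oplus\mathfrak{g}^*\rightrightarrows\mathfrak{g}$ (the case $\partial=0$). All Courant axioms hold, $\mathfrak{g}$ is Dirac with its given bracket, and the pairing is multiplicative, yet $[\![ X,\eta ]\!]$ has the nonzero $\mathfrak{g}$-component $-\mathrm{ad}^*_\eta X$ and this is not a CA-groupoid. You do flag at the very end that the Dirac condition on $gr(m_{\mathcal{G}})$ is what ``pins down the explicit form of the Courant bracket on core sections'', but you never connect that observation to the proof of (b), which is precisely where it is needed; as written, step (b) does not go through.
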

\begin{proof}
    As recalled in Example \ref{exvbgrpdoverM}, the structure of the VB-groupoid $\mathcal{G}$ is encoded on two vector bundles $K \rightarrow M$ and $C \rightarrow M$ (its side and core) and a linear map $\partial : C \rightarrow K$. By \cite[Proposition 2.3]{li2014dirac}, $K \rightarrow M$ is a Dirac structure on $\mathcal{G} \rightarrowtail M$, and then $K \Rightarrow M$ is a Lie algebroid. Let us highlight the compatibilities of $\mathcal{G}$ being a CA-groupoid:
    \begin{enumerate}[left=8pt, itemsep=0.3em]
        \item[(1)] $gr(m_{\mathcal{G}})$ is isotropic in $\mathcal{G} \times \overline{\mathcal{G} \times \mathcal{G}}$ if and only if 
        \begin{enumerate}[left=7pt, itemsep=0.3em]
            \item[(1.1)] $K$ is isotropic,
            \item[(1.2)] $(c,c') = (c, \partial c')$,
        \end{enumerate}
        \item[(2)] $gr(m_{\mathcal{G}})$ is involutive in $\mathcal{G} \times \overline{\mathcal{G} \times \mathcal{G}}$ if and only if 
        \begin{enumerate}[left=7pt, itemsep=0.3em]
            \item[(2.1)] $[c, c'] = [\partial c, c']$,
            \item[(2.2)] $[k, \partial c] = \partial [k, c]$,
            \item[(2.3)] $[\partial c, k] = \partial [c, k]$,
        \end{enumerate}
        \item[(3)] $gr(m_{\mathcal{G}})$ is compatible with the anchor $\rho$ of $\mathcal{G}$ if and only if $im \partial \subseteq ker \rho$,
    \end{enumerate}
    for $c, c' \in \Gamma(C)$, $k \in \Gamma(K)$. By $(1.2)$ and since $K$ is Lagrangian with respect to the non-degenerate metric of $\mathcal{G}$, we have the identification $C \simeq K^*$, and, under this identification, we have $\langle \gamma , k \rangle = 2 (\gamma , k)$ for $\gamma \in \Gamma(K^*)$ and $k \in \Gamma(K)$. Since $( \,\, , \,)$ is symmetric, $\partial : K^* \rightarrow K$ is symmetric, so $(a)$ is satisfied. By $(2.2)$ and $(1.2)$ we have that
    \begin{eqnarray}
        \langle \mathcal{L}_k \gamma, \gamma' \rangle &:=& \rho(k) \langle \gamma , \partial \gamma' \rangle - \langle \gamma , [k, \partial \gamma'] \rangle \nonumber \\
        &=& 2 \rho(k) ( \gamma , \partial \gamma' ) - 2 ( \gamma , \partial [k, \gamma'] ) \nonumber \\
        &=& 2 \rho(k) ( \gamma , \gamma' ) - 2 ( \gamma , [k, \gamma'] ). \nonumber
    \end{eqnarray}
    Since $\mathcal{G}$ is a Courant algebroid, we get that $\rho(k)(\gamma, \gamma') = ([k, \gamma], \gamma') + (\gamma, [k, \gamma'])$, and then
    \begin{eqnarray}
        \langle \mathcal{L}_k \gamma, \gamma' \rangle + \langle \mathcal{L}_k \gamma', \gamma \rangle &=& 4 \rho(k) ( \gamma , \gamma' ) - 2 ( \gamma , [k, \gamma'] ) - 2 ( \gamma' , [k, \gamma] ) \nonumber \\
        &=& 2 \rho(k) ( \gamma , \gamma' ) = 2 \rho(k) ( \partial \gamma , \gamma' ) \nonumber \\
        &=& \rho(k) \langle \partial \gamma , \gamma' \rangle, \nonumber
    \end{eqnarray}
    which shows that $(b)$ is satisfied.
\end{proof}

\vspace{1mm}

By taking $\lfloor \gamma , \gamma' \rfloor := \langle \partial \gamma, \gamma' \rangle$ on Proposition \ref{caracCAgrpdoverM}, for $\gamma, \gamma' \in \Gamma(K^*)$, we see immediately that CA-groupoids over $M \rightrightarrows M$ correspond exactly to co-quadratic Lie algebroids. Next, we will relate multiplicative Dirac structures over $M \rightrightarrows M$ to Dirac structures on co-quadratic Lie algebroids:

\begin{proposicao}\label{multMT=coquadMT}
    {\em Multiplicative Manin triples over the unit groupoid $M \rightrightarrows M$ are in correspondence with co-quadratic Manin triples.}
\end{proposicao}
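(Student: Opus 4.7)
The plan is to use Proposition \ref{caracCAgrpdoverM} to identify any CA-groupoid $\mathcal{G}$ over the unit groupoid with $K^{*}\oplus K$ for a co-quadratic Lie algebroid $(K,\lfloor\cdot,\cdot\rfloor)$, and then to show by direct inspection that multiplicative Dirac structures in $\mathcal{G}$ correspond to Dirac structures on $(K,\lfloor\cdot,\cdot\rfloor)$ in the sense of \cite{lang2021lie}. The Manin triple correspondence then follows from a short transversality argument.

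First I would describe the underlying multiplicative subbundle. By Example \ref{exvbgrpdoverM}, a VB-subgroupoid $L\subseteq\mathcal{G}$ over $M\rightrightarrows M$ is encoded by its side $D\subseteq K$ and its core $C_L\subseteq K^{*}$, together with the subgroupoid condition $\partial(C_L)\subseteq D$; as a subbundle, $L=C_L\oplus D$. Imposing $L=L^{\perp}$ with the pairing of Proposition \ref{caracCAgrpdoverM}, by specializing first to $k=k'=0$ and then to $\gamma'=k=0$, yields both $\lfloor C_L,C_L\rfloor=0$ and $C_L\subseteq ann(D)=D^{0}$; a rank count then forces $C_L=D^{0}$. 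Since $\partial\gamma\in D$ is equivalent to $\langle\gamma',\partial\gamma\rangle=\lfloor\gamma',\gamma\rfloor=0$ for all $\gamma'\in D^{0}$, the subgroupoid condition $\partial(D^{0})\subseteq D$ is itself equivalent to isotropy of $D^{0}$ and hence automatic. Multiplicative Lagrangian subbundles of $\mathcal{G}$ are therefore exactly the bundles of the form $L=D^{0}\oplus D$ with $D\subseteq K$ a subbundle and $D^{0}$ isotropic.

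Next I would analyze involutivity via the Courant bracket formula preceding Proposition \ref{caracCAgrpdoverM}. For $\gamma\oplus k,\gamma'\oplus k'\in\Gamma(L)$, the $K$-component of the bracket is $[k,k']_K$, which stays in $\Gamma(D)$ precisely when $D$ is a Lie subalgebroid of $K$. Given that, the identity $\langle\mathcal{L}_k\gamma',k''\rangle=\rho(k)\langle\gamma',k''\rangle-\langle\gamma',[k,k'']_K\rangle$ applied to $k''\in\Gamma(D)$ places $\mathcal{L}_k\gamma'$ and $\mathcal{L}_{k'}\gamma$ in $\Gamma(D^{0})$; the term $2\rho^{*}d(\gamma,k')$ vanishes since $\gamma\in\Gamma(D^{0})$ and $k'\in\Gamma(D)$; and the same identity, now used with $\partial\gamma\in\Gamma(D)$ (available by the previous paragraph), puts $\mathcal{L}_{\partial\gamma}\gamma'$ in $\Gamma(D^{0})$. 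Thus $L=D^{0}\oplus D$ is multiplicative Dirac if and only if $D$ is a Lie subalgebroid of $K$ with $D^{0}$ isotropic, i.e., a Dirac structure on $(K,\lfloor\cdot,\cdot\rfloor)$.

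The Manin triple step is then immediate: writing $L_i=D_i^{0}\oplus D_i$ one computes $L_1\cap L_2=ann(D_1+D_2)\oplus(D_1\cap D_2)$, so $\mathcal{G}=L_1\oplus L_2$ is equivalent to $K=D_1\oplus D_2$, which is the defining condition of a co-quadratic Manin triple. The main obstacle sits in the second paragraph, namely recognising the VB-subgroupoid condition $\partial(C_L)\subseteq D$ as a consequence of the co-quadratic isotropy of $D^{0}$ rather than an independent constraint; once this symmetry is exploited, everything else reduces to a direct calculation with the Courant bracket.
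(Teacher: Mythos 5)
Your argument is correct and follows essentially the same route as the paper: both reduce everything to Proposition \ref{caracCAgrpdoverM} and hinge on the same key observation that, once the core is identified with $D^{0}$, the subgroupoid condition $\partial(D^{0})\subseteq D$ is equivalent to isotropy of $D^{0}$ with respect to $\lfloor\cdot\,,\cdot\rfloor$, with the rest being the bracket computation for involutivity and a transversality count. Your single biconditional characterization of multiplicative Dirac structures as exactly the subbundles $D^{0}\oplus D$ is marginally more complete than the paper's two separate implications, since it makes explicit that the two assignments are mutually inverse, but the underlying ideas are the same.
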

\begin{proof}
    By Proposition \ref{caracCAgrpdoverM}, we already know how to relate CA-groupoids and co-quadratic Lie algebroids, and it is clear that transverse subbundles on one of these objects will correspond to transverse subbundles on the other one, so we only need to show that their Dirac structures are in correspondence.

    \vspace{1mm}

    Let $D$ be a Dirac structure on a co-quadratic Lie algebroid $K \Rightarrow M$, and consider $K^* \oplus K$ its associated CA-groupoid. Note that, since $D^0$ is isotropic with respect to $\lfloor \,\, , \, \rfloor$, we have that $\partial \vert_{D^0} \subseteq (D^0)^0 = D$. Using this last property, it is straightforward to observe that $D^0 \oplus D$ is isotropic and involutive on $K^* \oplus K \rightarrowtail M$, and also that $D^0 \oplus D$ is a Lie subgroupoid of $K^* \oplus K \rightrightarrows K$. Therefore, $D^0 \oplus D$ is a multiplicative Dirac structure on $K^* \oplus K$. 

    \vspace{1mm}

    Now, let $L$ be a multiplicative Dirac structure on a CA-groupoid $K^* \oplus K$ over $M \rightrightarrows M$, and define $D =: \mathtt{s}(L) = pr_K(K)$. Since $L$ is involutive and the bracket of $K^* \oplus K$ restricts to $K$ is just the Lie bracket of the Lie algebroid $K \Rightarrow M$, it is clear that $D$ is a Lie subalgebroid of $K \Rightarrow M$. Since $L \rightrightarrows D$ is a Lie subgroupoid of $K^* \oplus K \rightrightarrows K$, we have that $\mathtt{t}(\gamma \oplus k) := k + \partial \gamma \in D$ for $\gamma \oplus k \in L$, and then $\partial \gamma \in D$ for all $\gamma \oplus k \in L$. Therefore, $(\gamma, \gamma' \oplus k') = 0$ for all $\gamma \in D^0$ and $\gamma' \oplus k' \in L$, so $D^0 \subseteq L^{\perp} = L$, and then, since $L$ is isotropic, we get $\lfloor \gamma , \gamma' \rfloor := 2(\partial \gamma, \gamma') = 2(\gamma, \gamma') = 0$, for all $\gamma, \gamma' \in D^0$. It follows that $D^0$ is isotropic with respect to $\lfloor \,\, , \, \rfloor := 2 (\,\, , \,)$, and then $D$ is a Dirac structure on the co-quadratic Lie algebroid $K$.
\end{proof}

Now we will show how Theorem \ref{teorema1} recovers \cite[Theorem 3.7]{lang2021lie}, which states that Lie bialgebroid crossed modules correspond to co-quadratic Manin triples. The concept of Lie algebroid crossed modules \cite{androulidakis2005crossed} is a natural generalization of Lie algebra crossed modules, and is defined as follows: a \textbf{Lie algebroid crossed module} over a manifold $M$ is a quadruple $(\theta, \phi, A, \bullet)$, shortly denoted by $(\theta \overset{\phi}{\to} A)$, where $\theta \rightarrow M$ is a Lie algebra bundle, $A \Rightarrow M$ is a Lie algebroid, $\phi : \theta \rightarrow A$ is a Lie algebroid morphism and $\bullet : A \rightarrow \mathcal{D}(\theta)$ is a representation of $A$ on $\theta$, such that:
    \begin{enumerate}[left=10pt, itemsep=0.3em]
        \item[(1)] $\phi(c) \bullet c' = [c,c']_{\theta}$, for $c, c' \in \Gamma(\theta)$;
        \item[(2)] $\phi(v \bullet c) = [v, \phi(c)]_{A}$, for $v \in \Gamma(A), c \in \Gamma(\theta)$.
    \end{enumerate}
As recalled in Example \ref{exvbgrpdoverM}, a VB-groupoid over the unit groupoid $M \rightrightarrows M$ is encoded on two vector bundles $K \rightarrow M$ and $C \rightarrow M$ (its side and core) and a linear map $\partial : C \rightarrow K$. Along these lines, by considering $\phi = \partial$ and the Lie bracket between sections of $\theta$ and $A$ given by the representation, a Lie algebroid crossed module is an LA-groupoid as follows
$$
\begin{tikzcd}[
	column sep={2em,between origins},
	row sep={1.3em,between origins},]
	\theta \oplus A \,\,\,\,\,\,  & \rightrightarrows & A \\
	\Downarrow \,\,\,\,\,\, & {\scriptstyle \theta} & \Downarrow \\
	M \,\,\,\,\,\, & \rightrightarrows & M
\end{tikzcd}.
$$
From this viewpoint, a \textbf{Lie bialgebroid crossed module} \cite{lang2021lie} is just a Lie bialgebroid groupoid as follows
$$
\begin{tikzcd}[
	column sep={2em,between origins},
	row sep={1.3em,between origins},]
	\theta \oplus A \,\,\,\,\,\,  & \rightrightarrows & A \\
	\Downarrow \,\,\,\,\,\, & {\scriptstyle \theta} & \Downarrow \\
	M \,\,\,\,\,\, & \rightrightarrows & M
\end{tikzcd}
\quad
\begin{tikzcd}[
	column sep={2em,between origins},
	row sep={1.3em,between origins},]
	A^* \oplus \theta^* \,\,\,\,\,\,\,\,  & \rightrightarrows & \theta^* \\
	\Downarrow \,\,\,\,\,\,\,\, & {\scriptstyle A^*} & \Downarrow \\
	M \,\,\,\,\,\,\,\, & \rightrightarrows & M
\end{tikzcd}.
$$
Therefore, by applying Theorem \ref{teorema1} and Theorem \ref{teorema_co-quadLA}, we get that co-quadratic Manin triples are in correspondence with Lie bialgebroid crossed modules. This is precisely the result from \cite[Theorem 3.7]{lang2021lie}, which we recover as a special case of Theorem \ref{teorema1} for CA-groupoids over the unit groupoid.

\begin{exemplo}
    Given a Lie algebroid crossed module $(\theta \overset{\phi}{\to} A)$, we call $\mathbf{r} \in \Gamma(\wedge^2 \theta)$ a \textit{crossed module r-matrix} \cite{lang2021lie} if $X \bullet [\mathbf{r}, \mathbf{r}] = 0$ for all $X \in \Gamma(A)$. In this case, $\mathbf{r}$ is an r-matrix on $\theta$, and it gives rise to a Lie algebroid crossed module structure on $(A^* \overset{\phi^T}{\to} \theta^*)$, with the Lie algebroid structure on $\theta^*$ as in Example \ref{exbialgbdr-matrix}. By using the morphism $\phi : \theta \rightarrow A$, $\mathbf{r} \in \Gamma(\wedge^2 \theta)$ gives rise to a form $\mathbf{r'} \in \Gamma(A \wedge \theta) \oplus \Gamma(\wedge^2 A)$ such that $\mathbf{r} + \mathbf{r'}$ is an r-matrix of the Lie algebroid $\theta \oplus A \Rightarrow M$. It can be proved that the Lie algebroid structure on $A^* \oplus \theta^* \Rightarrow M$ comes from $\mathbf{r} + \mathbf{r'}$, and hence $(\theta \oplus A \Rightarrow M, A^* \oplus \theta^* \Rightarrow M)$ is a Lie bialgebroid. Therefore, the Lie algebroid crossed modules $(\theta \overset{\phi}{\to} A)$ and $(A^* \overset{\phi^T}{\to} \theta^*)$ form a Lie bialgebroid crossed module, which gives rise to a Lie bialgebroid groupoid, as constructed above. Its Drinfeld double, as shown in Theorem \ref{teorema1}, is a CA-groupoid, whose Courant algebroid structure is isomorphic to that one given by Drinfeld double of the Lie bialgebroid $(\theta \oplus A, \theta^* \oplus A^*)$, where the Lie algebroid structure on $\theta^* \oplus A^*$ is the trivial one (see Example \ref{exisoCourantr-matrix}). For more details, see \cite{lang2021lie}.
\end{exemplo}

\vspace{2mm}

Considering the even more particular case in which we consider double structures over a point, a \textbf{Lie 2-algebra} \cite{baez2003higher} is an LA-groupoid over $* \rightrightarrows *$ and a \textbf{Lie 2-bialgebra} \cite{bai2013lie} is a Lie bialgebroid groupoid over $* \rightrightarrows *$ (see \cite[\S 6]{bursztyn2021poisson}). On the other side, a CA-groupoid over a point coincides with the categorification of a quadratic Lie algebra: a \textbf{quadratic Lie 2-algebra} is a Lie 2-algebra
    $$
	\begin{tikzcd}[
		column sep={2em,between origins},
		row sep={1.3em,between origins},]
		\mathcal{G} & \rightrightarrows & \mathfrak{g}_0  \\
		\Downarrow & & \Downarrow \\
		* & \rightrightarrows & *
	\end{tikzcd}
	$$
with a \textit{multiplicative bilinear form} $\langle \,\, ,\, \rangle$ on $\mathcal{G}$, i.e. $gr(m_\mathcal{G})$ is isotropic on $\mathcal{G} \times \overline{\mathcal{G} \times \mathcal{G}}$, such that $(\langle \,\, ,\, \rangle, \mathcal{G} \Rightarrow *)$ is a quadratic Lie algebra. In this setup, the concept of Manin triples of Lie 2-algebras \cite{bai2013lie} coincides with the notion of multiplicative Manin triples. Therefore, the main result of this paper also recovers \cite[Theorem 2.12]{bai2013lie} as a special case.

\appendix

 \bibliographystyle{acm}
 \bibliography{biblio2}

\end{document}